\newtheorem{question}{Question}[section]
\begin{document}
 \title{Asymptotic formation and orbital stability of phase-locked states in Kuramoto--Lohe type synchronization models on Lie groups}%\thanks{Received date, and accepted date (The correct dates will be entered by the editor).}}

          %For each author, make a block with the following macros:

          \author{Seung-Yeon Ryoo\thanks{Mathematics Department, California Institute of Technology, Pasadena California 91125, United States, \href{mailto:sryoo@caltech.edu}{(sryoo@caltech.edu)}.}}

         \pagestyle{myheadings} \markboth{Formation and stability of phase-locked states in Kuramoto--Lohe models}{Seung-Yeon Ryoo} \maketitle

          \begin{abstract}
Some mathematical models of synchronization, such as the Kuramoto model (1975) and its generalizations pioneered by Lohe (2009), are formulated as ordinary differential equations describing populations of particles on Lie groups with locally attractive interactions. We suggest a model of synchronization on Lie groups and present a framework to understand the formation of phase-locked states and their orbital stability. This is a sequel to a previous joint work with Ha and Ko (2017).
          \end{abstract}
\begin{keywords}  Kuramoto model; Lohe model; Lie group; Gronwall inequality; functional calculus
\end{keywords}

 \begin{AMS} 34D06; 34H15; 82C22
\end{AMS}
          \section{Introduction}
\setcounter{equation}{0}

Let $G$ be a Lie group with Lie algebra $(\mathfrak{g},[,])$. We will define the \emph{generalized Kuramoto--Lohe model on} $G$ as the Cauchy problem
\begin{equation}\label{genLohe}
\begin{cases}
\displaystyle
\dot{X}_i=(dR_{X_i})_e\left(H_i+\frac{\kappa}{N}\sum_{j=1}^N \phi(X_j X_i^{-1})\right),\\
X_i(0)=X_i^0,
\end{cases}
\quad i=1,\cdots,N,
\end{equation}
for the solution $\{X_1,\cdots, X_N\}$, where $X_1^0,\cdots,X_N^0\in G$ are the initial data, $H_1,\cdots,H_N\in \mathfrak{g}$ are the intrinsic Hamiltonians, and $\kappa>0$ is the coupling strength. Also, $e\in G$ is the identity element of $G$, and for $g\in G$, $R_g:G\rightarrow G$ denotes right multiplication $R_g(x):=xg$ (and similarly $L_g:G\rightarrow G$ denotes left multiplication $L_g(x):=gx$). Finally, $\phi:G\to\mathfrak{g}$ is a map describing the pairwise interaction of the oscillators and satisfies
\begin{equation}\tag{H}\label{Hypo}
    \phi \mathrm{~is~} C^1, ~ \phi(e)=0, ~ \mathrm{all~eigenvalues~of~}(d\phi)_e:\mathfrak{g}\to\mathfrak{g} \mathrm{~have~positive~real~parts}.
\end{equation}

As will be explained in subsection \ref{subsec:motivation} below, this model generalizes the well-known Kuramoto model \cite{kuramoto1975international,kuramoto2003chemical} and its non-abelian generalization to the unitary groups by Lohe \cite{lohe2009non,lohe2010quantum}.

The standard Cauchy--Lipschitz theory guarantees the existence of a local $C^1$-solution to \eqref{genLohe}. However, global existence of a solution may fail if the initial data are far apart: see \cite[Remark 2.3]{ha2017emergent} for an example.

Note that, for $g\in G$ and $J$ in the center of $\mathfrak{g}$, the system \eqref{genLohe} admits the Galilean transformation
\[
X_i^0\mapsto X_i^0g,\quad X_i(t)\mapsto X_i(t)ge^{Jt},\quad H_i\mapsto H_i+J.
\]
Therefore, it is natural to consider right-invariant metrics on $G$ for the purposes of analyzing \eqref{genLohe}, and to consider equilibria of \eqref{genLohe} as the solutions for which the ratios $X_iX_j^{-1}$ are constant. More precisely, endow $\mathfrak{g}$ with a Euclidean norm $|\cdot|$, extend this to a right-invariant Riemannian metric on $G$, and let $d:G\times G\to [0,\infty)$ be the corresponding right-invariant geodesic distance. Also, we will call equilibria ``phase-locked states", as defined below.

\begin{definition}
A solution $\{X_i\}_{i=1}^N$ to \eqref{genLohe} is a \emph{phase-locked state} if $X_i(t)X_j(t)^{-1}$ is constant in $t$ for each $i,j=1,\cdots,N$.
\end{definition}

The following characterization is essentially due to Lohe \cite{lohe2009non}.
\begin{proposition}[Characterization of phase-locked states, {\cite[p.6]{lohe2009non}}]\label{Prop:pls}
A solution $\{X_i\}_{i=1}^N$ to \eqref{genLohe} is a phase-locked state if and only if $X_i(t)=X_i^\infty \exp(\Lambda t)$, where $\exp$ is the Lie group exponential and $X_i^\infty\in G$, $i=1,\cdots,N$, and $\Lambda\in \mathfrak{g}$ satisfy
\begin{equation}\label{pls}
\operatorname{Ad}_{X_i^\infty}\Lambda = H_i+\frac{\kappa}{N}\sum_{j=1}^N\phi(X_j^\infty (X_i^\infty)^{-1}),\quad i=1,\cdots,N,
\end{equation}
where $\operatorname{Ad}_g= (dL_g)_{g^{-1}}\circ (dR_{g^{-1}})_e$ is the adjoint.
\end{proposition}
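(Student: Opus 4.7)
The plan is to verify the equivalence by direct computation in both directions, working throughout with the right Maurer--Cartan trivialization $\omega_R(\dot X) := (dR_{X^{-1}})_X(\dot X) \in \mathfrak{g}$, since \eqref{genLohe} is phrased in precisely these terms.

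For the $(\Leftarrow)$ direction, assume $X_i^\infty \in G$ and $\Lambda \in \mathfrak{g}$ satisfy \eqref{pls}, and set $X_i(t) := X_i^\infty \exp(t\Lambda)$. Phase-locking is then immediate since $\exp(t\Lambda)$ and $\exp(-t\Lambda)$ telescope: $X_i(t) X_j(t)^{-1} = X_i^\infty (X_j^\infty)^{-1}$. For the equation of motion, a chain-rule computation using that $\exp(t\Lambda)$ is a one-parameter subgroup with right-trivialized velocity the constant $\Lambda$, and that $R_{X_i(t)^{-1}} \circ L_{X_i^\infty}$ is conjugation by $X_i^\infty$ restricted along the curve $\exp(t\Lambda)$, gives $\omega_R(\dot X_i) = \operatorname{Ad}_{X_i^\infty} \Lambda$. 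By \eqref{pls} this equals $H_i + \frac{\kappa}{N} \sum_j \phi(X_j X_i^{-1})$, confirming \eqref{genLohe}.

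For the $(\Rightarrow)$ direction, set $X_i^\infty := X_i(0)$. Constancy of $X_i X_j^{-1}$ gives $X_i(t) X_j(t)^{-1} = X_i^\infty (X_j^\infty)^{-1}$, which rearranges to $(X_i^\infty)^{-1} X_i(t) = (X_j^\infty)^{-1} X_j(t)$ for every pair $i,j$. Denote this common path by $g(t) \in G$, so $g(0) = e$ and $X_i(t) = X_i^\infty g(t)$ for all $i$. Substituting into \eqref{genLohe} and applying the same chain-rule identity as above, namely $\omega_R(\dot X_i) = \operatorname{Ad}_{X_i^\infty} \omega_R(\dot g)$, I obtain, for each $i$,
\begin{equation*}
\omega_R(\dot g)(t) = \operatorname{Ad}_{(X_i^\infty)^{-1}} \Bigl[ H_i + \frac{\kappa}{N} \sum_j \phi(X_j^\infty (X_i^\infty)^{-1}) \Bigr].
\end{equation*}
The right-hand side is manifestly constant in $t$, so $\omega_R(\dot g) = \Lambda$ for some constant $\Lambda \in \mathfrak{g}$; its independence from $i$ is precisely \eqref{pls}. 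Finally, $\omega_R(\dot g) = \Lambda$ with $g(0) = e$ is the standard one-parameter-subgroup initial value problem, solved uniquely by $g(t) = \exp(t\Lambda)$, and hence $X_i(t) = X_i^\infty \exp(t\Lambda)$.

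No substantive obstacle arises: the argument reduces to careful bookkeeping with the right-trivialization plus uniqueness of the flow of the right-invariant vector field generated by $\Lambda$. The only caution is to write everything in terms of $\omega_R$ rather than the matrix shorthand $\dot X X^{-1}$, so that the computations remain valid on an arbitrary, possibly non-matrix, Lie group $G$.
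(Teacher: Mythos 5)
The paper does not supply a proof of Proposition \ref{Prop:pls}; it attributes the statement to Lohe and moves on, so there is nothing to compare against. Your argument is correct and complete. Both directions are handled with the same clean device --- rewriting the right-trivialized velocity of a left-translated curve via $\omega_R(\dot X_i)=\operatorname{Ad}_{X_i^\infty}\omega_R(\dot g)$, which follows from $R_{g^{-1}}\circ L_{X_i^\infty}=L_{X_i^\infty}\circ R_{g^{-1}}$ --- and in the forward direction you correctly identify the common path $g(t)=(X_i^\infty)^{-1}X_i(t)$, observe from \eqref{genLohe} that $\omega_R(\dot g)$ equals the constant $\operatorname{Ad}_{(X_i^\infty)^{-1}}\bigl[H_i+\tfrac{\kappa}{N}\sum_j\phi(X_j^\infty(X_i^\infty)^{-1})\bigr]$, whose independence of $i$ is exactly \eqref{pls}, and then invoke uniqueness for the one-parameter-subgroup IVP $\omega_R(\dot g)=\Lambda$, $g(0)=e$, to conclude $g(t)=\exp(t\Lambda)$. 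The converse direction uses the same identity together with $\operatorname{Ad}_{\exp(t\Lambda)}\Lambda=\Lambda$, and the telescoping of $\exp(\pm t\Lambda)$ to show the ratios are constant. Your care to phrase everything through $\omega_R$ rather than the matrix shorthand $\dot X X^{-1}$ is exactly what is needed for the statement to hold on an arbitrary Lie group, which is the level of generality the paper requires.
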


The main theorems of this paper are as follows. The first main theorem says that if the intrinsic Hamiltonians are identical, then a unique global solution exists and exhibits complete synchronization.

\begin{theorem}\label{idmainthm}
There exist constants $C,c,c_1,c_2>0$ depending on $G$, $\mathfrak{g}$, and $\phi$ such that the following is true.
\begin{enumerate}[(1)]
    \item Let the coupling strength $\kappa$ and intrinsic Hamiltonians $H_i$ satisfy
\[
\kappa>c|H|,\quad H_i=H,~i=1,\cdots,N,
\]
for some fixed $H\in \mathfrak{g}$, and let the initial condition $\{X_i^0\}_{i=1}^N$ satisfy
\[
d(X_i^0,X_j^0)<c_1,\quad i,j=1,\cdots,N.
\]
Then the following statements hold.
\begin{enumerate}
    \item (Global existence of solutions) There is a global solution $\{X_i(t)\}_{i=1}^N$ to \eqref{genLohe}.
    \item (Exponential synchronization) We have
    \[
    d(X_i(t),X_j(t))\le Ce^{-c_2\kappa t},\quad i,j=1,\cdots,N.
    \]
\end{enumerate}
    \item Suppose that the interaction function $\phi$ satisfies in addition
    \[
    \operatorname{Ad}_g\phi(h)=\phi(ghg^{-1}),\quad g,h\in G.
    \]
    Let the coupling strength $\kappa$ and intrinsic Hamiltonians $H_i$ satisfy
\[
\kappa>0,\quad H_i=H,~i=1,\cdots,N,
\]
for some fixed $H\in \mathfrak{g}$, and let the initial condition $\{X_i^0\}_{i=1}^N$ satisfy
\[
d(X_i^0,X_j^0)<c_1.
\]
Then the following statements hold.
\begin{enumerate}
    \item (Global existence of solutions) There is a global solution $\{X_i(t)\}_{i=1}^N$ to \eqref{genLohe}.
    \item (Solution operator splitting) $\tilde X_i(t)=\exp(-Ht)X_i(t)$ is a solution to \eqref{genLohe} with $H_i=0$, $\tilde X_i(0)=X_i^0$.
    \item (Exponential synchronization) We have
    \[
    d(\exp(-Ht)X_i(t),\exp(-Ht)X_j(t))\le Ce^{-c_2\kappa t},\quad i,j=1,\cdots,N.
    \]
\end{enumerate}
\end{enumerate}

\end{theorem}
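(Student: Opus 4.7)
The plan is to work with the right-invariant relative configurations $Y_{ij}(t):=X_i(t)X_j(t)^{-1}$. Since the metric $d$ is right-invariant, $d(X_i,X_j)=d(Y_{ij},e)$, so it suffices to show that each $Y_{ij}$ tends to $e$ exponentially. Writing $\dot X_iX_i^{-1}=V_i$ with $V_i:=H+\frac{\kappa}{N}\sum_k\phi(X_kX_i^{-1})$ and computing $\dot Y_{ij}Y_{ij}^{-1}=V_i-\operatorname{Ad}_{Y_{ij}}V_j$ yields the key evolution equation
\[
\dot Y_{ij}Y_{ij}^{-1}=(H-\operatorname{Ad}_{Y_{ij}}H)+\frac{\kappa}{N}\sum_{k=1}^N\bigl[\phi(X_kX_i^{-1})-\operatorname{Ad}_{Y_{ij}}\phi(X_kX_j^{-1})\bigr].
\]
The critical structural input from \eqref{Hypo} is a Lyapunov-type inner product on $\mathfrak g$: since $d\phi_e$ has spectrum in the open right half plane, standard linear algebra provides an inner product $\langle\cdot,\cdot\rangle$ on $\mathfrak g$ under which $\operatorname{Re}\langle d\phi_e(v),v\rangle\ge c_0|v|^2$. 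I would fix this inner product at the outset, extend it right-invariantly to the metric on $G$, and use the resulting geodesic distance $d$ throughout.

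For part (1), set $D(t):=\max_{i,j}d(X_i(t),X_j(t))$ and pick an extremal pair $(i_*,j_*)$ at time $t$. The coercivity comes from noting that, by Baker--Campbell--Hausdorff, $\log(X_kX_{i_*}^{-1})-\log(X_kX_{j_*}^{-1})\approx-\log Y_{i_*j_*}$ to leading order, so each summand of the coupling term contributes $-d\phi_e(\log Y_{i_*j_*})+O(D^2)$, giving a total of $-\kappa\,d\phi_e(\log Y_{i_*j_*})+O(\kappa D^2)$ after summation over $k$ and multiplication by $\kappa/N$. Pairing with $\log Y_{i_*j_*}$ via the Lyapunov inner product and combining with the expansion $H-\operatorname{Ad}_{\exp v}H=-[v,H]+O(|v|^2)$, a Danskin-type differentiation of the max produces a differential inequality of the form
\[
\tfrac{d^+}{dt}D(t)\le-(c_1\kappa-c_2|H|)D(t)+O(D(t)^2).
\]
Choosing $\kappa>c|H|$ with $c:=c_2/c_1$ and $D(0)<c_1$ sufficiently small, a standard bootstrap-and-continuation argument then yields global existence together with $D(t)\le Ce^{-c_2\kappa t}$.

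For part (2), the Ad-equivariance hypothesis $\operatorname{Ad}_g\phi(h)=\phi(ghg^{-1})$ is tailored precisely so that the substitution $\tilde X_i(t):=\exp(-Ht)X_i(t)$ absorbs the drift. Indeed $\tilde X_k\tilde X_i^{-1}=\exp(-Ht)(X_kX_i^{-1})\exp(Ht)$, so $\phi(\tilde X_k\tilde X_i^{-1})=\operatorname{Ad}_{\exp(-Ht)}\phi(X_kX_i^{-1})$, and substituting into \eqref{genLohe} one verifies directly that $\tilde X_i$ solves \eqref{genLohe} with $H=0$ and the same initial data. Applying part (1) to $\tilde X_i$ now requires no lower bound on $\kappa$, since the obstruction term $c_2|H|D$ vanishes identically, and the conclusion follows. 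The main obstacle I anticipate is the careful bookkeeping in part (1): transferring the Lie-algebraic Lyapunov estimate into a one-sided derivative of the geodesic distance $d(Y_{ij},e)$, verifying that the coercive contribution from the full $k$-sum really dominates all Taylor remainders uniformly in $N$, and ensuring that the threshold constants $c$, $c_1$, $c_2$ depend only on $G$, $\mathfrak g$, $\phi$ and not on $N$ or on the particular $H$.
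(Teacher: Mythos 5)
Your proposal is correct, but it takes a more self-contained route than the paper. The paper proves this theorem by reducing entirely to Theorem \ref{orbitalstabilitymainthm}: part (1) is cited as a direct consequence of its parts (2) and (3) (and implicitly (1), since the limit $X_i^\infty(X_j^\infty)^{-1}=e$ is identified by observing that the constant configuration $X_i^\infty=Z(0)$, $\Lambda=\operatorname{Ad}_{Z(0)^{-1}}H$, is a phase-locked state and invoking uniqueness up to right-multiplication), while for part (2) only the solution-operator-splitting computation is done directly, with (a) and (c) again cited from \ref{orbitalstabilitymainthm}. You instead re-derive the core Gronwall estimate from scratch, and, importantly, you exploit the identical-Hamiltonian structure to get the sharper bound $H-\operatorname{Ad}_{Y_{ij}}H=-[\log Y_{ij},H]+O(|\log Y_{ij}|^2)$, which produces an $O(|H|D)$ remainder rather than the paper's $O(\|H\|_\infty)$ term in Lemma \ref{FirstGronwall}. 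This sharper inequality $\frac{d^+}{dt}D\le-(c_1\kappa-c_2|H|)D+o(D)$ (note: with $\phi$ only assumed $C^1$ as in \eqref{Hypo}, your error term should read $o(D)$ rather than $O(D^2)$, which does not affect the conclusion) gives exponential decay of the diameter directly, so you never need the existence/uniqueness machinery for phase-locked states. What this buys you is a shorter logical dependence and a cleaner explanation of why the rate is $O(\kappa)$; what the paper's route buys is brevity of the written argument once the heavier Theorem \ref{orbitalstabilitymainthm} is in place. Your treatment of part (2) via $\tilde X_k\tilde X_i^{-1}=\exp(-Ht)(X_kX_i^{-1})\exp(Ht)$ and the $\operatorname{Ad}$-equivariance of $\phi$ matches the paper's "straightforward computation," and applying part (1) with $H=0$ correctly removes the lower bound on $\kappa$.
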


The second main theorem says that if the coupling strength $\kappa$ is sufficiently large compared to the magnitude of the Hamiltonians $|H_i|$, then phase-locked states, with each particle being sufficiently close together (measured by $d$), exist and are unique up to right-multiplication. Also, if the initial data are sufficiently close together, then a global solution exists and exponentially converges towards a right-multiplication of the aforementioned phase-locked state.

\begin{theorem}\label{orbitalstabilitymainthm}
There exist constants $C$ and $c_2$ depending on $G$, $\mathfrak{g}$, and $\phi$, and a sufficiently small constant $c_1>0$ and a sufficiently large constant $c>0$ depending on $G$, $\mathfrak{g}$, and $\phi$ such that\footnote{This wording means that in the proof we will first select $C$ and $c_2$, then select $c_1$ depending on $C$ and $c_2$, then select $c$ depending on $C$, $c_2$, and $c_1$.} if
\begin{equation}\label{kappa}
    \kappa>c \|H\|_\infty,\quad \|H\|_\infty\coloneqq \max_{i=1,\cdots,N}|H_i|,
\end{equation}
and if $\{X_i(t)\}_{i=1}^N$ is a solution to \eqref{genLohe} with initial data $\{X_i^0\}_{i=1}^N$ satisfying
\[
    d(X_i^0,X_j^0)<c_1,\quad\forall i,j=1,\cdots,N,
\]
then the following holds.
\begin{enumerate}[(1)]
    \item (Existence and uniqueness of a phase-locked state) There exists a phase-locked state $\{X_i^\infty \exp(\Lambda t)\}_{i=1}^N$, with $X_i^\infty\in G$, $i=1,\cdots,N$, $\Lambda\in \mathfrak{g}$ satisfying \eqref{pls}, such that
    \[
    d(X_i^\infty,X_j^\infty)\le \frac{C\|H\|_\infty}{\kappa}.
    \]
    This is unique up to right-multiplication in a larger set, i.e., if $\{\tilde{X}_i(t)\}_{i=1}^N$ is a phase-locked state with
    \[
    d(\tilde{X}_i^0,\tilde{X}_j^0)<c_1,\quad\forall i,j=1,\cdots,N,
    \]
    then $\tilde{X}_i(t)=X_i^\infty \exp(\Lambda t)g$, $i=1,\cdots,N$, for some fixed $g\in G$. (This larger set will be the basin of attraction for this phase-locked state.)
    \item (Global existence and local stability) There exists a unique global solution $X(t)=\{X_i(t)\}_{i=1}^N$ to \eqref{genLohe}, with
    \[
    d(X_i(t),X_j(t))<c_1,~t\ge 0,\quad \forall i,j=1,\cdots,N,
    \]
    and
    \[
    \limsup_{t\to\infty}d(X_i(t),X_j(t))\le \frac{C\|H\|_\infty}{\kappa},~t\ge 0,\quad \forall i,j=1,\cdots,N.
    \]
    \item (Asymptotic phase-locking) We have
    \[
        \lim_{t\to\infty}X_i(t)X_j(t)^{-1}=X_i^\infty (X_j^{\infty})^{-1},
    \]
    the convergence rate being $O_{G}(e^{-\kappa c_2t})$.\footnote{We will use the following (standard) asymptotic notation. For $P, Q>0$, $P=O(Q)$ and $Q=\Omega(P)$ mean that $P\le KQ$ for a universal constant $K\in (0,\infty)$. If we need to allow for dependence on parameters, we indicate this by subscripts. For example, in the presence of auxiliary parameters $\psi, \xi$, the notations $P=O_{\psi,\xi}(Q)$ and $Q=\Omega_{\psi,\xi}(P)$ mean that $P\le K(\psi,\xi)Q$ where $K(\psi,\xi)\in (0,\infty)$ may depend only on $\psi$ and $\xi$.}
    \item (Orbital stability) The solution $X(t)=\{X_i(t)\}_{i=1}^N$ exponentially converges to a right-multiplication of the phase-locked state of (1), i.e., there is a phase-locked state $\{Z_i^\infty\exp(M t)\}_{i=1}^N$ as given in Proposition \ref{Prop:pls} such that
    \[
        d(X_i(t),Z_i^\infty \exp(M t))=O_{G,\phi}(e^{-\kappa c_2 t}),\quad t\ge 0.
    \]
    \item (Exponential synchronization of the normalized speeds) We have
    \[
        (dL_{X_i(t)}^{-1})_e\dot{X}_i(t)-M=O_{G,\phi}(\|H\|e^{-\kappa c_2t}),\quad i=1,\cdots,N.
    \]
    
\end{enumerate}
\end{theorem}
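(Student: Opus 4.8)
The plan is to work throughout with the relative configurations $Y_{ij}(t):=X_i(t)X_j(t)^{-1}$, since \eqref{genLohe} is equivariant under right translation and $d(X_i,X_j)=d(Y_{ij},e)$. Write $\Omega_i:=(dR_{X_i^{-1}})_{X_i}\dot X_i=H_i+\tfrac{\kappa}{N}\sum_k\phi(X_kX_i^{-1})$ for the right-logarithmic derivative of $X_i$; a direct computation then gives that the right-logarithmic derivative of $Y_{ij}$ is $\omega_{ij}:=\Omega_i-\operatorname{Ad}_{Y_{ij}}\Omega_j$, and that $\dot\Omega_i=\tfrac{\kappa}{N}\sum_k(d\phi)_{Y_{ki}}(dR_{Y_{ki}})_e\,\omega_{ki}$. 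Using hypothesis \eqref{Hypo} I would first replace the Euclidean norm on $\mathfrak g$ by an equivalent Lyapunov-adapted one (which only changes the constants) so that $\langle(d\phi)_e v,v\rangle\ge\lambda|v|^2$ for some $\lambda>0$ and all $v$. The first step is an a priori estimate for the diameter $D(t):=\max_{i,j}d(X_i(t),X_j(t))$: as long as $D(t)<c_1$ with $c_1$ a small constant, the expansions $\phi(Y)=(d\phi)_e\log Y+O(|\log Y|^2)$, $\operatorname{Ad}_Y=\mathrm{id}+O(|\log Y|)$, $d(Y,e)\asymp|\log Y|$, together with the coercivity of $(d\phi)_e$, yield (via Danskin's lemma for the maximum) a differential inequality $\dot D\le-\tfrac{\lambda}{2}\kappa D+C\|H\|_\infty$ -- here the quadratic interaction corrections are $O(\kappa D^2)$ and are absorbed by $c_1$ small, while the $O(\|H\|_\infty D)$ term is absorbed by $\kappa>c\|H\|_\infty$. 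A continuity/trapping argument then gives $D(t)<c_1$ for all $t$ -- hence, the $\Omega_i$ being bounded and a right-invariant metric being complete, a global solution exists -- and $\limsup_{t\to\infty}D(t)\le 2C\|H\|_\infty/(\lambda\kappa)$. This yields item (2).

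The crux is the exponential contraction behind item (3). Differentiating $\omega_{ij}$, substituting the formula for $\dot\Omega_i$, and using the cocycle identity $\omega_{ki}-\omega_{kj}=-\operatorname{Ad}_{Y_{ki}}\omega_{ij}$ (immediate from $Y_{kj}=Y_{ki}Y_{ij}$), the interaction term is seen to produce a damping: $\dot\omega_{ij}=-\kappa(d\phi)_e\omega_{ij}+E_{ij}$ with $|E_{ij}|\le C(\kappa c_1+\|H\|_\infty)\max_{k,l}|\omega_{kl}|$, where one uses $D<c_1$, the bound $|\Omega_i|=O(\|H\|_\infty+\kappa c_1)$, and the same Taylor expansions. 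Pairing $\dot\omega_{ij}$ with $\omega_{ij}$ in the adapted inner product, taking the maximum over pairs, and using $c_1$ small and $\kappa>c\|H\|_\infty$ gives $\tfrac{d}{dt}\max_{i,j}|\omega_{ij}|\le-\tfrac{\lambda}{2}\kappa\max_{i,j}|\omega_{ij}|$, so that $|\omega_{ij}(t)|=O(\|H\|_\infty e^{-\kappa c_2 t})$ (absorbing an $O(1)$ factor from the initial transient, after which $D=O(\|H\|_\infty/\kappa)$). Since the right-invariant norm of $\dot Y_{ij}$ equals $|\omega_{ij}|$ and is thus integrable in $t$, $Y_{ij}(t)$ converges to some $Y_{ij}^\infty\in G$ at the stated exponential rate, which is item (3).

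From item (3), $\Omega_i(t)=H_i+\tfrac{\kappa}{N}\sum_k\phi(Y_{ki}(t))$ converges exponentially to $\Omega_i^\infty:=H_i+\tfrac{\kappa}{N}\sum_k\phi(Y_{ki}^\infty)$, with $|\Omega_i^\infty|=O(\|H\|_\infty)$. I would then set $P_i(t):=\exp(-\Omega_i^\infty t)X_i(t)$; its right-logarithmic derivative is $\operatorname{Ad}_{\exp(-\Omega_i^\infty t)}(\Omega_i-\Omega_i^\infty)$, of norm $\le e^{C|\Omega_i^\infty|t}|\Omega_i-\Omega_i^\infty|=O(e^{(C\|H\|_\infty-\kappa c_2)t})$, which for $\kappa>c\|H\|_\infty$ is integrable, so $P_i(t)\to Z_i^\infty:=P_i^\infty$ exponentially. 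Put $M:=\operatorname{Ad}_{(Z_i^\infty)^{-1}}\Omega_i^\infty$; checking that $M$ is independent of $i$ (e.g., via $\mu_i-\mu_j=-\operatorname{Ad}_{X_j^{-1}}\omega_{ji}$ with $\|\operatorname{Ad}_{X_j^{-1}}\|=O(e^{C\|H\|_\infty t})$, which is beaten by the decay of $\omega_{ji}$), one finds $Z_i^\infty\exp(Mt)=\exp(\Omega_i^\infty t)Z_i^\infty$ and that $(Z_i^\infty,M)$ solves \eqref{pls}, so $\{Z_i^\infty\exp(Mt)\}$ is a phase-locked state with $d(Z_i^\infty,Z_j^\infty)\le\limsup_t d(X_i,X_j)=O(\|H\|_\infty/\kappa)<c_1$. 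The estimate $d(gx,gy)\le\|\operatorname{Ad}_g\|\,d(x,y)$ for right-invariant $d$ then gives $d(X_i(t),Z_i^\infty\exp(Mt))=d(\exp(\Omega_i^\infty t)P_i(t),\exp(\Omega_i^\infty t)Z_i^\infty)\le\|\operatorname{Ad}_{\exp(\Omega_i^\infty t)}\|\,d(P_i(t),Z_i^\infty)=O(e^{C\|H\|_\infty t})\,O(e^{-\kappa c_2 t})=O_{G,\phi}(e^{-\kappa c_2 t})$ for $\kappa>c\|H\|_\infty$, which is item (4); item (5) follows similarly by expanding $\mu_i-M=\operatorname{Ad}_{P_i^{-1}}\operatorname{Ad}_{\exp(-\Omega_i^\infty t)}\Omega_i-\operatorname{Ad}_{(Z_i^\infty)^{-1}}\Omega_i^\infty$ and invoking the exponential convergences of $P_i$ and $\Omega_i$ (the growth of $\|\operatorname{Ad}_{\exp(-\Omega_i^\infty t)}\|$ again absorbed by $\kappa>c\|H\|_\infty$). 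Finally, item (1): existence follows by applying items (2)--(4) to, say, the solution with $X_i^0=e$, producing a phase-locked state with $d(X_i^\infty,X_j^\infty)=O(\|H\|_\infty/\kappa)$; and if $\{X_i(t)=X_i^\infty\exp(\Lambda t)\}$ is any phase-locked state with $d(X_i^0,X_j^0)<c_1$, then by items (2)--(4) it too converges to a phase-locked state, which by the rigidity of mutually bounded phase-locked states must be a right-translate of $\{Z_i^\infty\exp(Mt)\}$, and tracing the limits back shows $X_i(t)=X_i^\infty\exp(\Lambda t)g$ for a fixed $g\in G$.

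\textbf{Main obstacle.} The hard part is the contraction estimate of the second paragraph: one must differentiate the coupled, genuinely nonlinear interaction term and extract from it precisely the damping $-\kappa(d\phi)_e\omega_{ij}$ via the cocycle identity, while controlling every nonlinear cross-term uniformly. The structural reason this is delicate -- and the reason the threshold $\kappa>c\|H\|_\infty$ (with $c$ depending on $G,\mathfrak g,\phi$) is forced -- is that the speeds $\Omega_i$ are only $O(\|H\|_\infty)$, not small, so the $\kappa$-strength damping must outrun an $O(\|H\|_\infty)$-strength drift; relatedly, left translations are not isometries of the right-invariant metric, so $\|\operatorname{Ad}_{\exp(Vt)}\|$ grows like $e^{O(|V|)t}$ and repeatedly erodes the exponential rates of the third paragraph, each erosion having to be re-absorbed by enlarging $c$. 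A secondary, more tedious point is the bookkeeping needed to pin down the common generator $M$ and the components $Z_i^\infty$ of the limiting phase-locked state and to identify it with the canonical one of item (1).
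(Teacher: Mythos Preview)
Your proposal is correct and closely parallels the paper, but the contraction step behind item (3) is organized differently. The paper does not differentiate to the velocity level; instead it compares two solutions $Y$ and $\tilde Y$ of the log-ratio system and proves the Gronwall inequality
\[
\dot Y_{ij}-\dot{\tilde Y}_{ij}=-\kappa\bigl[(d\phi)_e(Y_{ij}-\tilde Y_{ij})+o(\|Y-\tilde Y\|_\infty)\bigr]+O(\|H\|_\infty\,\|Y-\tilde Y\|_\infty),
\]
then obtains (3) by specializing to the time-shifted solution $\tilde X(t)=X(t+T)$ (Cauchy in $t$). Your velocity-decay estimate for $\omega_{ij}$ is exactly the $\delta\to 0$ limit of this with $\tilde X=X(\cdot+\delta)$: the same cocycle identity and the same error bookkeeping appear, so the two computations are essentially the same calculation read in two ways. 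Your route is a bit more direct for (3)–(5) since it avoids the auxiliary solution and already produces the frequency-difference quantity needed for (5); the paper's route is cleaner for (1), because comparing two \emph{arbitrary} solutions immediately gives that any two phase-locked states in the $c_1$-ball have identical ratios, hence are right-translates.

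The only place your write-up is thin is precisely that uniqueness argument: the phrase ``rigidity of mutually bounded phase-locked states'' is not a result you have proved, and your $\omega$-decay applied to a phase-locked state is vacuous ($\omega\equiv 0$). You can close this either (i) by rerunning your second-paragraph computation with $Y_{ij}-\tilde Y_{ij}$ in place of $\omega_{ij}$ (this is literally the paper's Lemma on the second Gronwall inequality; the error terms and the use of the cocycle/BCH identity are identical), which yields $\|Y-\tilde Y\|_\infty\to 0$ and hence uniqueness; or (ii) by noting that your $\omega$-estimate makes every equilibrium of the autonomous $Y$-system in the $c_1$-ball exponentially attracting, hence isolated, so the continuous map $Y^0\mapsto Y^\infty$ on the connected ball is constant. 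Either fix is short; the paper chooses (i).
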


In the rest of the introduction, we will discuss the motivation and significance of the model \eqref{genLohe} and Theorem \ref{orbitalstabilitymainthm}, and we will provide possible lines of further research.

\subsection{Motivation and significance of the generalized Kuramoto--Lohe model and Theorem \ref{orbitalstabilitymainthm}.}\label{subsec:motivation}
The \emph{Kuramoto model}  \cite{kuramoto1975international,kuramoto2003chemical} is the Cauchy problem for the real variables $\{\theta_i(t)\}_{i=1}^N$ given by the nonlinear system of ordinary differential equations
\begin{equation}\label{Ku}
\begin{cases}
\dot{\theta}_i(t)=\omega_i+\frac{\kappa}{N}\sum_{j=1}^N \sin(\theta_j(t)-\theta_i(t))\\
\theta_i(0)=\theta_i^0
\end{cases}
,\quad i=1,\cdots,N,
\end{equation}
where $\omega_1,\cdots,\omega_N\in \mathbb{R}$ are the intrinsic frequencies, $\kappa>0$ is the coupling strength, and $\theta_1^0,\cdots,\theta_N^0\in \mathbb{R}$ are the initial data.

We may consider the Kuramoto model to be a special case of the generalized Kuramoto--Lohe model, with Lie group $G=\mathbb{S}^1$ and interaction function $\phi(\theta)=\sin\theta$ (we are considering the dynamics on the universal cover $\mathbb{R}$ of $\mathbb{S}^1$). Since the interactions are pairwise attractive, we may expect that if the coupling strength $\kappa$ is large enough compared to the differences $\omega_i-\omega_j$ in the intrinsic frequencies, then the population should become more and more ordered as time goes by.

Theorem \ref{orbitalstabilitymainthm} was proven for the Kuramoto model by Ha, Ha, and Kim \cite{ha2010complete} and Choi, Ha, Jung, and Kim \cite{choi2012asymptotic}, with $c_1=\frac{\pi}{4}$. Namely, if the particles all lie on the same quarter-circle, then they stay within a quarter-circle for all time, their relative phases $\theta_i-\theta_j$ converge to a unique value, $\theta_i-\theta_i^\infty-\nu t$ converges to zero for some speed $\nu$ and some phase-locked state $\{\theta_i^\infty\}$ unique up to an additive constant, and $\dot{\theta}_i-\dot{\theta}_j$ converges exponentially to zero (one can extend the quarter-circle to a half-circle with some complications).

Lohe \cite{lohe2009non,lohe2010quantum} suggested a nonabelian generalization of the Kuramoto model to the unitary groups $U(d)$  as follows: the \emph{Lohe model} describes the dynamics of $\{U_i(t)\}_{i=1}^N\subset U(d)$ as
\begin{equation}\label{UnitaryLohe}
\dot{U}_i U_i^\dagger=iH_i+\frac{\kappa}{2N}\sum_{j=1}^N\left(U_jU_i^\dagger-U_iU_j^\dagger\right),
\end{equation}
where $\kappa>0$ is again the coupling strength, the Hermitian matrix $H_i$ is the Hamiltonian of $U_i$, and $\dagger$ denotes the Hermitian conjugate. It can be checked that the Kuramoto model is the $U(1)$ case of the Lohe model \eqref{UnitaryLohe}, and the Lohe model \eqref{UnitaryLohe} is in turn the case $G=U(d)$, $\phi(U)=\frac 12 (U-U^\dagger)$ of the generalized Kuramoto--Lohe model. Lohe \cite{lohe2009non} numerically observed Theorem \ref{orbitalstabilitymainthm} for the $U(2)$ model, and Ha and the author \cite{ha2016emergence} gave a rigorous proof of Theorem \ref{orbitalstabilitymainthm} for all unitary groups $U(d)$.

It was suggested by Lohe \cite{lohe2009non} that the model \eqref{UnitaryLohe} can be extended to other matrix groups. Here is a direct quotation from \cite[p.5]{lohe2009non}:
\begin{quote}
{\em \ldots There also exist non-Abelian Kuramoto models for the symplectic groups, with defining equations similar to [\eqref{UnitaryLohe}], and hence for all classical compact Lie groups. By choosing an indefinite metric one may extend these models to the noncompact case, for which trajectories are unbounded, however, we do not discuss synchronization or other properties of these particular models here\ldots}
\end{quote}
In \cite{ha2017emergent}, Ha, Ko, and the author interpreted this as a model on $GL_n(\mathbb{C})$  as follows:
\begin{equation}\label{MatrixLohe}
\dot{X}_i X_i^{-1}=H_i+\frac{\kappa}{2N}\sum_{j=1}^N\left(X_jX_i^{-1}-X_iX_j^{-1}\right),
\end{equation}
and characterized various matrix Lie groups, such as $U(d)$, $O(m,n)$, $SO(m,n)$, $SP(d)$, $SL_2(\mathbb{R})$, on which this model defines a closed trajectory. We were also able to establish Theorem \ref{orbitalstabilitymainthm} for the model \eqref{MatrixLohe} in \cite{ha2017emergent}.

We remark that the following variant model on $GL_n(\mathbb{C})$
\begin{equation}\label{DeVille}
\dot{X}_i X_i^{-1}=H_i+\frac{1}{2}\sum_{j=1}^N\gamma_{ij}\left(f(X_jX_i^{-1})-f(X_iX_j^{-1})\right),
\end{equation}
was considered by DeVille \cite{deville2019synchronization}.

The author was thus led to propose the model \eqref{genLohe} in the general setting of Lie groups $G$ with locally attractive interaction $\phi$, and to consider whether Theorem \ref{orbitalstabilitymainthm} holds true in this setting. Part (2) of Theorem \ref{orbitalstabilitymainthm} is unsurprising: as the particles are pairwise attractive, if they are initially close together, then they should stay close together for all times. Thus, part (4) of Theorem \ref{orbitalstabilitymainthm}, the orbital stability statement, is the crux of Theorem \ref{orbitalstabilitymainthm}: it only requires \eqref{Hypo}, namely that $\phi$ is $C^1$ and that it is locally attractive in the sense that $(d\phi)_e$ has only eigenvalues with positive real parts. Note that higher order considerations, such as $C^2$ regularity of $\phi$ or curvature properties of $G$, are irrelevant.

%The model \eqref{genLohe} is versatile.
Thus, as long as we define any model in the form of \eqref{genLohe}, by choosing a Lie group $G$ and an interaction function $\phi:G\to\mathfrak{g}$ to satisfy \eqref{Hypo}, then this model will automatically have the stability properties of Theorem \ref{orbitalstabilitymainthm}. This grants us significant liberty in defining locally well-behaved synchronization models: we only need to choose a Lie group $G$ and an interaction function $\phi:G\to\mathfrak{g}$ satisfying \eqref{Hypo}.

For example, it was suggested by an anonymous referee for the previous paper \cite{ha2016emergence} that we consider the model on $G=SU(d), SL_d(\mathbb{C}), SL_d(\mathbb{R})$ given by
\begin{equation}\label{SLnLohe}
\dot{X}_i X_i^{-1}=H_i+\frac{\kappa}{2N}\sum_{j=1}^N\left(X_jX_i^{-1}-X_iX_j^{-1}-\frac 1d \mathrm{Tr}[X_jX_i^{-1}]I_d+\frac 1d \mathrm{Tr}[X_iX_j^{-1}]I_d\right).
\end{equation}
One can check that \eqref{Hypo} holds for this interaction function, so that Theorem \ref{orbitalstabilitymainthm} holds for \eqref{SLnLohe}.

Here is an intuitive heuristic as to why Theorem \ref{orbitalstabilitymainthm} should be true. If the particles are concentrated locally, say around the identity $e\in G$, then we may linearize the system by setting
\[
X_i=\exp(Y_i)\quad i=1,\cdots,N,
\]
and the governing equation \eqref{genLohe} linearizes to
\begin{equation}\label{linearization}
\dot{Y}_i = H_i+\frac{\kappa}{N}\sum_{k=1}^N (d\phi)_e(Y_k-Y_i),\quad i=1,\cdots,N.
\end{equation}
This is an exactly solvable linear differential equation. In particular, the dynamics for the differences $Y_i-Y_j$ is simple:
\[
\dot{Y}_i-\dot{Y}_j = H_i-H_j -\kappa (d\phi)_e(Y_i-Y_j),\quad i,j=1,\cdots,N.
\]
As $(d\phi)_e$ has only eigenvalues with positive real parts, if we make $\kappa$ large enough compared to $|H_i-H_j|$, then $Y_i-Y_j$ should converge exponentially to a fixed value.

Thus, the proof of Theorem \ref{orbitalstabilitymainthm} for the nonlinear model \eqref{genLohe} will be a ``nonlinearization'' of the above heuristic argument, i.e., we will check that the nonlinear effects are controllable compared to the dominant linear term. One caveat of this nonlinearization is that we will need to require that $\kappa$ is large compared to the $|H_i|$'s as in \eqref{kappa}, instead of requiring $\kappa$ to be large compared to the $|H_i-H_j|$'s. There will also be the complication that the $X_i$'s may not stay in the neighborhood of a single point for all time, so we will work with the logarithms of the \emph{ratios} $X_iX_j^{-1}$, which will stay close to the identity $e\in G$.

%%%%%%%%%%%%%%%%%%%%%%%%%%%%%%%%%%%%%%%%%%%%%%%%%%%%%%%%%%%%%%%%%%%%
\subsection{Directions for further research.}
%So the reason for further studying such models comes from nonlocal behavior.
One reason for considering generalizations of the Kuramoto model \eqref{Ku} should come from a search for new phenomena not observed in the Kuramoto model. Thus, since Theorem \ref{orbitalstabilitymainthm} is true not only for the Kuramoto model but also for its Lie group generalizations in the form of \eqref{genLohe}, the merit of studying such models in the form of \eqref{genLohe} will have to come from the behavior of the model when the particles are not extremely concentrated. For example, DeVille \cite{deville2019synchronization} presents several special solutions to \eqref{DeVille} and shows that it has multiple attractors under rich network topologies, while Ritchie, Lohe, and Williams \cite{ritchie2018synchronization} present a detailed study of the global solutions of the special case $G=SO(1,1)$.

%For example, one might get synchronous chaos.
In the proof of Theorem \ref{orbitalstabilitymainthm}, one proves the statements in the order of (2), (3), (1), (4), and (5), with possibly smaller $c_1$ and larger $c$ (and consequently larger $\kappa$) along the way. Thus there might be a sequence of phase transitions
\[
X_iX_j^{-1}\mbox{ bounded }\Rightarrow X_iX_j^{-1}\mbox{ converges }\Rightarrow (X_i\to X_j^\infty \exp(\Lambda t)),
\]
as we increase the coupling strength $\kappa$. It would be interesting to see whether the first phase-transition actually occurs. That is, for small $\kappa$, the solutions might stay bounded from each other with their ratios not converging, possibly exhibiting chaotic behavior. Observing such `synchronous chaos' in a prototypical model such as \eqref{UnitaryLohe} would be interesting.
\begin{question}[Synchronous chaos]\label{chaos}
Is it possible that in a semi-local regime with intermediate coupling strength $\kappa$:
\[
c_1'<\max_{i,j=1,\cdots,N}d(X_i^0,X_j^0)<c_1,\quad c\|H\|_\infty <\kappa<c'\|H\|_\infty,
\]
with different constants $c_1'$ and $c'$, $X_iX_j^{-1}$ stays bounded yet fail to converge? Does this occur for the model \eqref{UnitaryLohe}?
\end{question}\newline
See Remark \ref{chaosremark} for another formulation of this question.

We remark that in general, as we only guarantee the local property of the interaction function $\phi$, it is in general impossible to guarantee asymptotic phase-locking for generic initial data.

\begin{example}
By \eqref{ratioODE} below, for $N=2$ and $H_1=H_2=0$, we have
\begin{align}
\begin{aligned}
\frac{d}{dt}\left(X_1 X_2^{-1}\right)
=&\frac{\kappa}{2}\left((dR_{X_1X_2^{-1}})_e \phi(X_2 X_1^{-1})-(dL_{X_1X_2^{-1}})_e\frac{\kappa}{2} \phi(X_1 X_2^{-1})\right).
\end{aligned}
\end{align}
In particular, if $\phi(g^{-1})=-\phi(g)$ and $G$ is abelian, we have
\begin{align}
\begin{aligned}
\frac{d}{dt}\left(X_1 X_2^{-1}\right)
=&-\kappa\left((dR_{X_1X_2^{-1}})_e \phi(X_1 X_2^{-1})\right).
\end{aligned}
\end{align}
For the $2$-torus $G=\mathbb{T}^2=\mathbb{S}^1\times \mathbb{S}^1$, it is not hard to construct $\phi:\mathbb{T}^2\to \mathbb{R}^2$ with the following properties:
\begin{itemize}
\item $\phi$ satisfies \eqref{Hypo} near the identity $(1,1)$ of $\mathbb{T}^2$ (we are viewing $\mathbb{S}^1\subset \mathbb{C}$ and $\mathbb{T}^2\subset \mathbb{C}^2$),
\item $\phi(e^{\mathrm{i}\theta},e^{\mathrm{i}\varphi})=(0,\mathrm{i})$, for $\theta\in \left(\frac\pi 2-\varepsilon,\frac\pi 2+\varepsilon\right)$ and $\varphi\in \mathbb{R}$, where $\varepsilon\in \left(0,\frac\pi 2\right)$, and
\item $\phi(g^{-1})=-\phi(g)$, $g\in \mathbb{T}^2$.
\end{itemize}
Then, the vectorfield $g\mapsto (dR_g)_e\phi(g)$ induces a closed circular vectorfield on $\{e^{\mathrm{i}\theta}\}\times \mathbb{S}^1$ for each $\theta\in \left(\frac\pi 2-\varepsilon,\frac\pi 2+\varepsilon\right)$, so that if $X_1^0=(e^{\mathrm{i}\theta},1)$ and $X_2^0=(1,1)$, then $X_1(t)X_2(t)^{-1}=(e^{\mathrm{i}\theta},e^{-\mathrm{i}\kappa t})$. No matter how high $\kappa$ is, there is a nonempty open set of initial data for which asymptotic phase-locking never happens. Hence asymptotic phase-locking for generic initial data in the large coupling regime is impossible in this case.
\end{example}

%For future works, can we model other sync models in this manner?
We remark that some models which are qualitatively different from the Kuramoto model can still be written in the form \eqref{genLohe} with some modifications. For example, the Cucker--Smale model \cite{cucker2007emergent,cucker2007mathematics}
\begin{equation}\label{CS}
    \begin{cases}
    \dot{x}_i = v_i,\\
    \dot{v}_i = \frac{\kappa}{N}\sum_{j=1}^N \psi(|x_i-x_j|)(v_j-v_i),
    \end{cases}
    t>0,~i=1,\cdots,N,
\end{equation}
where $\psi:\mathbb{R}^+\to\mathbb{R}$ is a function, is, up to a uniform translation in the $x$-variable, in the form of \eqref{genLohe} with $(x,v)\in G=T\mathbb{R}^d=\mathbb{R}^d\times \mathbb{R}^d$, with the interaction function $\phi$ now failing to have only eigenvalues with positive real parts, and fails to be globally defined if $\psi$ possesses a singularity at $0$. Another example is the Swarmalator model proposed by O'Keeffe, Hong, and Strogatz \cite{o2017oscillators}:
\begin{equation}\label{Swarm}
    \begin{cases}
    \dot{x}_i = v_i+\frac 1N \sum_{j=1}^N [ I_{att}(x_j-x_i) F(\theta_j-\theta_i)-I_{rep}(x_j-x_i)],\\
    \dot{\theta}_i = \omega_i+\frac{\kappa}{N}\sum_{j=1}^N H_{att}(\theta_j-\theta_i)G(x_j-x_i),
    \end{cases}
    t>0,~i=1,\cdots,N.
\end{equation}
This is again in the form of \eqref{genLohe} with $(x,\theta)\in G = \mathbb{R}^d\times \mathbb{S}^1$. Thus it is natural to pose the following question.

\begin{question} What is the behavior of the model \eqref{genLohe} for different $\phi$?
\begin{enumerate}[(1)]
    \item Let $\phi:G\to\mathfrak{g}$ be smooth but not satisfy \eqref{Hypo}. To what extent can the local behavior of \eqref{genLohe} be explained by the behavior of the linearized model \eqref{linearization}?
    \item Let $G=\mathbb{R}^d$ and let $\phi:\mathbb{R}^d\setminus\{0\}\to\mathbb{R}^d$ possess a singularity at the origin. Are there certain types of singularities of $\phi$ for which solutions near the origin exhibit nice behavior?
\end{enumerate}
\end{question}

Another drawback of Theorem \ref{orbitalstabilitymainthm} is a possible deterioration of the statements for large dimensions. In many cases, we have a family of Lie groups, such as the family of unitary groups $\{U(d)\}_{d=1}^\infty$, whose dimension grows to infinity. In this case, the size of the stability basin of Theorem \ref{orbitalstabilitymainthm} compared to the whole group diminishes exponentially as the dimension parameter grows to infinity. Thus, in practice, it would be desirable to obtain dimension-independent bounds for Theorem \ref{orbitalstabilitymainthm}. This is possible for the Lohe model \ref{UnitaryLohe}, by using the operator norm instead of the Frobenius norm, as can be checked by modifying the arguments of \cite{ha2016emergence} in a straightforward manner.
\begin{question}
For incarnations of the model \eqref{genLohe} other than \eqref{UnitaryLohe}, is it possible to obtain dimension-independent versions of Theorem \ref{orbitalstabilitymainthm}?
\end{question}

It would also be interesting to study infinite limits such as the mean-field limit, as performed for the Lohe model \eqref{UnitaryLohe} by Golse and Ha \cite{golse2019mean}, or to study stochastic variations, as performed for the Lohe matrix model \eqref{MatrixLohe} by Kim and Kim \cite{kim2020stochastic}. One could also study the effects of frustrations as in the work of Ha, Kim, Park, and the author \cite{ha2021constants}.

The rest of this paper is organized as follows. We recall basic Lie group theory and establish some notation in Section \ref{sec:Lie}, and then we prove Theorem \ref{orbitalstabilitymainthm} in Section \ref{sec:proof}.
%%%%%%%%%%%%%%%%%%%%%%%%%%%%%%%%%%%%%%%%%%%%%%%%%%%%%%%%%%%%%%%%%%%%
\section{Notation and preliminaries on Lie groups and Lie algebras}\label{sec:Lie}
\setcounter{equation}{0}

We establish some notation about our Lie group $(G,\mathfrak{g})$ and recall some standard facts from the literature. 

As $(d\phi)_e:\mathfrak{g}\to\mathfrak{g}$ has only eigenvalues with positive real parts, we may assume that the right-invariant Riemannian metric on $G$, which is given as an inner product $\langle,\rangle$ on  $\mathfrak{g}$ with corresponding norm $|\cdot|=\langle\cdot,\cdot\rangle^{1/2}$, is so that for some $\lambda>0$,
\begin{equation}\label{pos}
    \langle v,(d\phi)_e(v)\rangle\ge \lambda |v|^2,\quad \forall v\in \mathfrak{g}.
\end{equation}
(This change of Riemannian metric only distorts the geodesic metric of $G$ by a constant factor, and thus does not affect the validity of Theorem \ref{orbitalstabilitymainthm}.)
We define, for $r>0$, the ball of radius $r$
\[
B_r=\{v\in \mathfrak{g}:|v|<r\}.
\]

The Lie group exponential map $\exp:\mathfrak{g}\to G$ is a local diffeomorphism at $0\in \mathfrak{g}$, with $(d\exp)_0=\operatorname{id}_\mathfrak{g}$. We may choose a radius $r>0$ depending on $G$ so that $\exp:B_r\to \exp(B_r)$ is a diffeomorphism, and consequently the logarithm $\log:\exp(B(0,r))\to B_r$ is well defined and smooth; we will use this as a coordinate chart for $G$ in this paper.

The Baker--Campbell--Hausdorff formula describes the group operation on this coordinate chart. Given $v,w\in \mathfrak{g}$, with $|v|,|w|$ sufficiently small, if we define
\begin{align}
    \begin{aligned}
    u\coloneqq& \sum_{m = 1}^\infty\frac {(-1)^{m-1}}{m}
\sum_{\begin{smallmatrix} r_1 + s_1 > 0 \\ \vdots \\ r_m + s_m > 0 \end{smallmatrix}}
\frac{[ v^{r_1} w^{s_1} v^{r_2} w^{s_2} \dotsm v^{r_m} w^{s_m} ]}{(\sum_{j = 1}^m (r_j + s_j)) \cdot \prod_{i = 1}^m r_i! s_i!}\\
    =& v+w+a(v,w),
    \end{aligned}
\end{align}
where we have used the notation
\[
[ g^{r_1} h^{s_1} \dotsm g^{r_m} h^{s_m} ] = [ \underbrace{g,[g,\dotsm[g}_{r_1} ,[ \underbrace{h,[h,\dotsm[h}_{s_1} ,\,\dotsm\, [ \underbrace{g,[g,\dotsm[g}_{r_m} ,[ \underbrace{h,[h,\dotsm h}_{s_m} ]]\dotsm]],
\]
and $a(v,w)$ denotes the sum of the terms of second or higher degree, then the power series defining $u$ is absolutely convergent within some positive radius of convergence, and satisfies
\[
\exp(u)=\exp(v)\exp(w).
\]
We may assume that the radius $r>0$ is sufficiently smaller than this radius of convergence so that for $u,v\in B_r$, the Baker--Campbell--Hausdorff formula for $u$ and $v$ converges absolutely, with the map $a:B_r\times B_r\to \mathfrak{g}$ being smooth with bounded derivatives:
\begin{align}\label{a-bound}
    a(u,v)&=O_G(|u||v|),\nonumber \\
 a(u,v)-a(u',v')&=O_G((|u|+|u'|)|v-v'|+(|v|+|v'|)|u-u'|), ~\forall u,v,u',v'\in B_r.
\end{align}

We denote the adjoint map $\operatorname{Ad}:G\to \operatorname{Aut}(\mathfrak{g})$, which is defined to be $\operatorname{Ad}_g= (dL_g)_{g^{-1}}\circ (dR_{g^{-1}})_e=(dR_{g^{-1}})_g\circ (dL_g)_e$ for $g\in G$. The derivative of the adjoint map, $\operatorname{ad}:\mathfrak{g}\to \operatorname{Der}(\mathfrak{g})$, is given by $\operatorname{ad}_v=[v,\cdot]$, for all $v\in \mathfrak{g}$. It is well-known that
\begin{equation}\label{adjexp}
\operatorname{Ad}_{\exp(v)}=e^{\operatorname{ad}_v},\quad \forall v\in \mathfrak{g},
\end{equation}
where the right-hand side denotes the element of $\operatorname{Aut}(\mathfrak{g})$ given by
\[
e^{\operatorname{ad}_v}\coloneqq \sum_{m=0}^\infty \frac{1}{m!}(\operatorname{ad}_v)^m.
\]

We will need to take derivatives under the exponential map. If $Y:I\to\mathfrak{g}$ is a differentiable curve in $\mathfrak{g}$ defined on some interval $I\subset \mathbb{R}$, then it is well-known that
\begin{equation}\label{diffexp}
    \frac{d}{dt}\exp(Y)=(dR_{\exp(Y)})_e\frac{e^{\operatorname{ad}_Y}-1}{\operatorname{ad}_Y}\left(\frac{dY}{dt}\right),
\end{equation}
where, on the right-hand side, the operator $\frac{e^{\operatorname{ad}_Y}-1}{\operatorname{ad}_Y}$ is defined using the power series expansion for $\frac{e^z-1}{z}$. By functional calculus, if $r>0$ is sufficiently small and $v\in B_r$, then the operator $\frac{e^{\operatorname{ad}_v}-1}{\operatorname{ad}_v}$ is invertible with the well-defined inverse $\frac{\operatorname{ad}_v}{e^{\operatorname{ad}_v}-1}$. Thus, if $Y(t)\in B_r$ for all $t\in I$, then 
denoting $X=\exp(Y)$, we have
\begin{equation}\label{difflog}
    \frac{d}{dt}\log(X)=\frac{\operatorname{ad}_Y}{e^{\operatorname{ad}_Y}-1}(dR_{X^{-1}})_X\left(\frac{dX}{dt}\right).
\end{equation}

Again by functional calculus, if $r>0$ is sufficiently small, we have
\begin{equation}\label{funccalc1}
    \frac{\operatorname{ad}_v}{e^{\operatorname{ad}_v}-1},\frac{\operatorname{ad}_v}{1-e^{-\operatorname{ad}_v}}=\operatorname{id}_\mathfrak{g}+O(|v|),\quad v\in B_r,
\end{equation}
and
\begin{equation}\label{funccalc2}
    \frac{\operatorname{ad}_v}{e^{\operatorname{ad}_v}-1}-\frac{\operatorname{ad}_w}{e^{\operatorname{ad}_w}-1},\frac{\operatorname{ad}_v}{1-e^{-\operatorname{ad}_v}}-\frac{\operatorname{ad}_w}{1-e^{-\operatorname{ad}_w}}=O(|v-w|),\quad v,w\in B_r,
\end{equation}
in the sense of operator norms. We will later use \eqref{funccalc1} and \eqref{funccalc2} to control \eqref{difflog}.

%%%%%%%%%%%%%%%%%%%%%%%%%%%%%%%%%%%%%%%%%%%%%%%%%%%%%%%%%%%%%%%%%%%%%%%%%%
\section{Proof of Theorems \ref{idmainthm} and \ref{orbitalstabilitymainthm}}\label{sec:proof}
\setcounter{equation}{0}

In this section, we prove Theorems \ref{idmainthm} and \ref{orbitalstabilitymainthm}. Theorem \ref{idmainthm} is proven relatively directly assuming Theorem \ref{orbitalstabilitymainthm}.
\begin{proof}{\bf (Proof of Theorem \ref{idmainthm} assuming Theorem \ref{orbitalstabilitymainthm}.)}
Statement (1) of Theorem \ref{idmainthm} is a direct consequence of statements (2) and (3) of Theorem \ref{orbitalstabilitymainthm}.

In statement (2) of Theorem \ref{idmainthm}, statement (b) is a straightforward computation: defining $\tilde{X}_i(t)\coloneqq \exp(-Ht)X_i(t)$, we have $\tilde{X}_i(0)=X_i^0$ and
\[
dR_{\tilde{X}_i^{-1}}(\dot{\tilde{X}}_i(t))=\operatorname{Ad}_{\exp(-Ht)}\left(H+\frac{\kappa}{N}\sum_{j=1}^N \phi(X_j X_i^{-1})\right)-H=\frac{\kappa}{N}\sum_{j=1}^N \phi(\tilde{X}_j \tilde{X}_i^{-1}).
\]
Statements (a) and (c) are immediate from statements (2) and (3) of Theorem \ref{orbitalstabilitymainthm} for $\{\tilde{X}_i(t)\}_{i=1}^N$.
\end{proof}

Theorem \ref{orbitalstabilitymainthm} is proven by two reductions. First, we will simplify Theorem \ref{orbitalstabilitymainthm} into the more dynamically tractable statement of Theorem \ref{reduction}. Then, to prove Theorem \ref{reduction}, instead of working with the system \eqref{genLohe} on the Lie group $G$, we will work with a system, namely \eqref{logarithms} below, on the Lie algebra $\mathfrak{g}$ of the logarithms of the ratios of the particles. Proving Theorem \ref{reduction} on the Lie group $G$ will reduce to proving the corresponding Theorem \ref{Ymainthm} on the Lie algebra $\mathfrak{g}$. We will then prove Theorem \ref{Ymainthm} by introducing two Lyapunov functionals and establishing Gronwall inequalities for them.

\subsection{Reduction of Theorem \ref{orbitalstabilitymainthm}.}
We now begin the the proof of Theorem \ref{orbitalstabilitymainthm}. We first remark on the order of constants. We first select $C$ and $c_2$ depending on $G$, $\mathfrak{g}$, and $\phi$, then select $c_1$ sufficiently small depending on $C$ and $c_2$, then select $c$ sufficiently large depending on $C$, $c_2$, and $c_1$.

We first reduce Theorem \ref{orbitalstabilitymainthm} to the following theorem.
\begin{theorem}\label{reduction}
There exist a universal constant $c_3>0$, a sufficiently large constant $c>0$ and a sufficiently small constant $c_1>0$ depending on $G$, $\mathfrak{g}$, and $\phi$ such that if
\begin{equation*}
    \kappa>c \|H\|_\infty,
\end{equation*}
then the following holds.

\begin{enumerate}[(1)]
    \item (Local stability) Let $\{X_i(t)\}_{i=1}^N$ be a solution to \eqref{genLohe} with initial data $\{X_i^0\}_{i=1}^N$ satisfying
\[
    d(X_i^0,X_j^0)<c_1,\quad\forall i,j=1,\cdots,N.
\] Then there exists a unique global solution $X(t)=\{X_i(t)\}_{i=1}^N$ to \eqref{genLohe}, with
    \[
    d(X_i(t),X_j(t))<c_1,~t\ge 0,\quad \forall i,j=1,\cdots,N,
    \]
    and
    \[
    \limsup_{t\to\infty}d(X_i(t),X_j(t))\le \frac{c_3\|H\|_\infty}{\kappa\lambda},~t\ge 0,\quad \forall i,j=1,\cdots,N,
    \]
    where $\lambda$ is given as in \eqref{pos}.
    \item (Convergence of two flows) Let $\{\tilde{X_i}\}_{i=1}^N$ be a solution to \eqref{genLohe} with initial data $\{\tilde{X}_i^0\}_{i=1}^N$ satisfying
\begin{equation}\label{virtualSmall}
    d(\tilde{X}_i^0,\tilde{X}_j^0)<c_1,\quad\forall i,j=1,\cdots,N.
\end{equation}
    Then
    \[
        d(X_i(t)X_j(t)^{-1},\tilde{X}_i(t)\tilde{X}_j(t)^{-1})=O_G(d(X_i^0(X_j^0)^{-1},\tilde{X}_i^0(\tilde{X}_j^0)^{-1})e^{-\kappa\lambda t/3}).
    \]
\end{enumerate}
\end{theorem}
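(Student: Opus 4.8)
The plan is to work in the coordinate chart given by $\log$ and pass to the logarithms of the ratios. For a solution $\{X_i\}_{i=1}^N$ to \eqref{genLohe} with $d(X_i^0,X_j^0)<c_1$, set $Y_{ij}=\log(X_iX_j^{-1})$, which is well-defined and lies in $B_r$ as long as $d(X_i,X_j)<c_1\le r$. Differentiating the relation $X_iX_j^{-1}=\exp(Y_{ij})$ using \eqref{difflog}, \eqref{diffexp}, and the governing equation \eqref{genLohe} — and using the right-invariance of the metric so that $d(X_i,X_j)=|Y_{ij}|$ up to a universal constant — one obtains a closed-form ODE system for the $Y_{ij}$'s. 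Schematically this system reads
\[
\dot Y_{ij}=\frac{\operatorname{ad}_{Y_{ij}}}{1-e^{-\operatorname{ad}_{Y_{ij}}}}\Bigl(H_i-\operatorname{Ad}_{\exp(Y_{ij})}H_j+\frac{\kappa}{N}\sum_{k}\bigl(\phi(X_iX_k^{-1})-\operatorname{Ad}_{\exp(Y_{ij})}\phi(X_jX_k^{-1})\bigr)\Bigr).
\]
Using \eqref{Hypo} (so $\phi(e)=0$ and $(d\phi)_e$ is positive in the sense of \eqref{pos}), \eqref{funccalc1}, \eqref{adjexp}, and \eqref{a-bound}, the dominant term on the right is $-\kappa(d\phi)_e(Y_{ij})$, with everything else either of order $\|H\|_\infty$ or quadratically small in the $|Y_{k\ell}|$'s and hence absorbable once $c_1$ is small and $\kappa/\|H\|_\infty$ is large. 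This is the reduction I would formalize as a separate theorem on $\mathfrak g$ (the excerpt calls it Theorem \ref{Ymainthm}), and then deduce Theorem \ref{reduction} from it.

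For part (1), the local stability, I would run a continuity (bootstrap) argument: let $D(t)=\max_{i,j}|Y_{ij}(t)|$ and suppose $D(t)<c_1$ on a maximal interval. Computing $\frac{d}{dt}|Y_{ij}|^2=2\langle Y_{ij},\dot Y_{ij}\rangle$ and feeding in the expansion above, the linear term contributes $-2\kappa\langle Y_{ij},(d\phi)_e Y_{ij}\rangle\le -2\kappa\lambda|Y_{ij}|^2$ by \eqref{pos}, the Hamiltonian term contributes $O(\|H\|_\infty|Y_{ij}|)$, and the remaining nonlinear interaction terms are $O(\kappa D|Y_{ij}|\cdot\max_k|Y_{ik}|+\kappa D|Y_{ij}|^2+\cdots)=O(\kappa c_1 D^2)$. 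Taking the maximizing pair and using a standard max-of-smooth-functions Gronwall argument gives $D'\lesssim -\kappa\lambda D+c_1\kappa D+\|H\|_\infty$, so for $c_1$ small enough (say $c_1\lambda\ll 1$) we get $D'\le -\tfrac{1}{2}\kappa\lambda D+C\|H\|_\infty$; this forces $D(t)<c_1$ to persist (choosing $c$ so that $C\|H\|_\infty/(\kappa\lambda)<c_1$ and the exponential decay of the overshoot keeps us below $c_1$), giving global existence and $\limsup_t D(t)\le 2C\|H\|_\infty/(\kappa\lambda)$, which is the claimed bound with $c_3=2C$. Global existence then follows because the ratios staying in a compact set prevents blowup of the type in Example \ref{example:blowup} — one shows that the center-of-mass-like quantity, e.g. $\log X_1$, stays bounded on finite time intervals.

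For part (2), the convergence of two flows, I would compare $Y_{ij}$ and $\tilde Y_{ij}=\log(\tilde X_i\tilde X_j^{-1})$, both of which stay in $B_{c_1}$ by part (1). Setting $W_{ij}=Y_{ij}-\tilde Y_{ij}$ and subtracting the two ODEs, the linear term gives $-\kappa(d\phi)_e W_{ij}$ while every other difference is estimated using the Lipschitz bounds \eqref{a-bound}, \eqref{funccalc2}, and the smoothness of $\phi$, yielding terms of size $O(\|H\|_\infty|W_{ij}|)$ and $O(\kappa c_1\max_{k,\ell}|W_{k\ell}|)$. Thus $\frac{d}{dt}\max_{i,j}|W_{ij}|\le (-\kappa\lambda+C\|H\|_\infty+C\kappa c_1)\max_{i,j}|W_{ij}|\le -\tfrac{\kappa\lambda}{3}\max_{i,j}|W_{ij}|$ once $c_1$ is small and $c$ large, and Gronwall gives the stated rate $e^{-\kappa\lambda t/3}$, with the initial factor coming from $|W_{ij}(0)|=|Y_{ij}^0-\tilde Y_{ij}^0|\asymp d(X_i^0(X_j^0)^{-1},\tilde X_i^0(\tilde X_j^0)^{-1})$.

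The main obstacle I anticipate is bookkeeping the nonlinear error terms precisely enough to see that they are all of the form $O(\kappa c_1 \cdot(\text{first order}))$ — that is, genuinely dominated by the linear dissipation once $c_1$ is chosen small, uniformly over $N$ and over the particular solution. In particular one must be careful that the factors of $\kappa$ multiplying the interaction sums do not overwhelm the $-\kappa\lambda$ dissipation: this works precisely because each nonlinear interaction term carries an extra small factor ($|Y_{k\ell}|\le c_1$ or $a(\cdot,\cdot)=O(|\cdot||\cdot|)$), so the $\kappa$'s cancel and leave a coefficient like $c_1$ which we control by shrinking $c_1$. Handling the operators $\frac{\operatorname{ad}_Y}{1-e^{-\operatorname{ad}_Y}}$ via the functional-calculus estimates \eqref{funccalc1}--\eqref{funccalc2}, rather than by ad hoc series manipulation, is what keeps this manageable and is presumably why the author isolated those lemmas in Section \ref{sec:Lie}.
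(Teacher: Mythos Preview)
Your proposal is correct and follows essentially the same route as the paper: pass to the logarithms $Y_{ij}=\log(X_iX_j^{-1})$, derive the closed ODE system for them (the paper's \eqref{logarithms}), and prove two Gronwall inequalities for the Lyapunov functionals $\|Y\|_\infty$ and $\|Y-\tilde Y\|_\infty$ (the paper's Lemmas \ref{FirstGronwall} and \ref{SecondGronwall}), using \eqref{funccalc1}--\eqref{funccalc2} and \eqref{a-bound} to see that all nonlinear corrections carry an extra factor of $\|Y\|_\infty$ or $\|\tilde Y\|_\infty$ and are thus dominated by the $-\kappa\lambda$ dissipation once $c_1$ is small. Your schematic ODE has the operator and the argument of $\phi$ slightly off (compare with \eqref{logarithms}), but since both $\frac{\operatorname{ad}_Y}{e^{\operatorname{ad}_Y}-1}$ and $\frac{\operatorname{ad}_Y}{1-e^{-\operatorname{ad}_Y}}$ are $\operatorname{id}+O(|Y|)$ this does not affect the argument, and you correctly identify the leading term as $-\kappa(d\phi)_e(Y_{ij})$.
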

\begin{proof}{\bf (Proof of Theorem \ref{orbitalstabilitymainthm} assuming Theorem \ref{reduction}.)}
We prove Theorem \ref{orbitalstabilitymainthm} in the following order.
\begin{itemize}
    \item[(2)] We note that Theorem \ref{orbitalstabilitymainthm} (2) is just Theorem \ref{reduction} (1).
    \item[(1)+(3)] We prove statements (1) and (3) together.
    
    Let $\{\tilde{X}_i\}_{i=1}^N$ be a solution to \eqref{genLohe} with initial data $\{\tilde{X}_i^0\}_{i=1}^N$ satisfying
    \[
        d(\tilde{X}_i^0,\tilde{X}_j^0)<c_1,\quad\forall i,j=1,\cdots,N.
    \]
    By Theorem \ref{reduction} (2), we know that
    \begin{equation}\label{ratioexpsync}
        d(X_i(t)X_j(t)^{-1},\tilde{X}_i(t)\tilde{X}_j(t)^{-1})\le C e^{-\kappa \lambda t/3},
    \end{equation}
    for some constant $C$ depending on $G$. By considering the time-delayed solution $\tilde{X}(t)=X(t+1)$ to \eqref{genLohe}, Theorem \ref{reduction} (1) tells us that \eqref{virtualSmall} is true, so \eqref{ratioexpsync} tells us that $\{X_i(n)X_j(n)^{-1}\}_{n\in \mathbb{N}}$ is a Cauchy sequence and hence is exponentially convergent. By considering the time-delayed solutions $\tilde{X}(t)=X(t+T)$ for $0\le T\le 1$, again Theorem \ref{reduction} (1) tells us that \eqref{virtualSmall} is true, so we have by \eqref{ratioexpsync} that
\begin{align*}
&\sup_{0\le T\le 1}d(X_i(n)X_j(n)^{-1},X_i(n+T)X_j(n+T)^{-1})\\
&\le \sup_{0\le T\le 1} d(X_i(0)X_j(0)^{-1},X_i(T)X_j(T)^{-1})\cdot Ce^{-\kappa \lambda n/3}\\
&\le O_G(Ce^{-\kappa \lambda n/3}),\quad n\in \mathbb{N},
\end{align*}
which tells us that $\{X_i(t)X_j(t)^{-1}\}_{t\ge 0}$ is convergent to some value $X_{ij}^\infty$, $i,j=1,\cdots,N$.

Now it remains to verify the existence of a phase-locked state $X_i^\infty \exp (\Lambda t)$ satisfying (1), and that $X_{ij}=X_i^\infty (X_j^\infty)^{-1}$.

Note that, as $\frac{d}{dt}X^{-1}=-(dL_{X^{-1}})_e(dR_{X^{-1}})_X\dot{X}$,
\begin{equation*}
\frac{d}{dt}X_i^{-1}=-(dL_{X_i^{-1}})_e\left(H_i+\frac{\kappa}{N}\sum_{k=1}^N \phi(X_k X_i^{-1})\right).
\end{equation*}
So
\begin{align}\label{ratioODE}
\begin{aligned}
\frac{d}{dt}\left(X_i X_j^{-1}\right)=&(dR_{X_j^{-1}})_{X_i}\left(\frac{d}{dt}X_i\right)+(dL_{X_i})_{X_j^{-1}}\left(\frac{d}{dt}X_j^{-1}\right)\\
=&(dR_{X_iX_j^{-1}})_e\left(H_i+\frac{\kappa}{N}\sum_{k=1}^N \phi(X_k X_i^{-1})\right)\\
&-(dL_{X_iX_j^{-1}})_e\left(H_j+\frac{\kappa}{N}\sum_{k=1}^N \phi(X_k X_j^{-1})\right).
\end{aligned}
\end{align}

Since the ratios $X_iX_j^{-1}$ converge, both sides of \eqref{ratioODE} must converge to zero. Hence, for $i,j=1,\cdots,N$,
\begin{equation}\label{ratioasympequality}
(dR_{X_{ij}^{\infty}})_e\left(H_i+\frac{\kappa}{N}\sum_{k=1}^N \phi(X_{ki}^{\infty})\right)=(dL_{X_{ij}^{\infty}})_e\left(H_j+\frac{\kappa}{N}\sum_{k=1}^N \phi(X_{kj}^{\infty})\right).
\end{equation}

Now, by continuity, we must have $X_{ii}^\infty = e$ and $X_{ij}^\infty X_{jk}^\infty=X_{ik}^\infty$, $i,j,k=1,\cdots,N$. Therefore if we set
\[
X_i^\infty \coloneqq X_{i1}^\infty,\quad i=1,\cdots,N,
\]
then $X_{ij}=X_i^\infty(X_j^\infty)^{-1}$, and \eqref{ratioasympequality} becomes 
\begin{align*}
&(\operatorname{Ad}_{X_i^\infty})^{-1}\left(H_i+\frac{\kappa}{N}\sum_{k=1}^N \phi(X_{k}^{\infty}(X_i^\infty)^{-1})\right)\\
&=(\operatorname{Ad}_{X_j^\infty})^{-1}\left(H_j+\frac{\kappa}{N}\sum_{k=1}^N \phi(X_{k}^{\infty}(X_j^\infty)^{-1})\right),\quad i,j=1,\cdots,N.
\end{align*}
Thus we can define the common value $\Lambda\in \mathfrak{g}$ as
\[
\Lambda = (\operatorname{Ad}_{X_i^\infty})^{-1}\left(H_i+\frac{\kappa}{N}\sum_{k=1}^N \phi(X_{k}^{\infty}(X_i^\infty)^{-1})\right),\quad i=1,\cdots,N.
\]
Then $\Lambda$ and $X_i^\infty$ satisfy \eqref{pls}, so by Proposition \ref{Prop:pls}, $\{X_i^\infty \exp(\Lambda t)\}_{i=1}^N$ is indeed a phase-locked state.

We have just proved (3) and the existence statement of (1). It remains to verify the uniqueness statement of (1).

Let $\{\tilde{X}_i^\infty \exp(\tilde{\Lambda} t)\}_{i=1}^N$ be another phase-locked state, with
\[
d(\tilde{X}_i^\infty, \tilde{X}_j^\infty)<c_1.
\]
By Theorem \ref{reduction} (2), we have
\[
    X_i^\infty (X_j^\infty)^{-1}=\tilde{X}_i^\infty(\tilde{X}_j^\infty)^{-1},\quad i,j=1,\cdots,N.
\]
So $\tilde{X}_i^\infty(X_i^\infty)^{-1}=\tilde{X}_j^\infty(X_j^\infty)^{-1}$, $i,j=1,\cdots,N$, and if we set $g$ as this common value, we have 
\[
\tilde{X}_i^\infty=X_i^\infty g,\quad i=1,\cdots,N.
\]
Now from \eqref{pls}, we have $\operatorname{Ad}_{X_i^\infty}\Lambda = \operatorname{Ad}_{X_i^\infty g}\tilde{\Lambda}$, so
\[
    \tilde{\Lambda} = \operatorname{Ad}_{g^{-1}}\Lambda.
\]
Therefore
\[
\tilde{X}_i^\infty \exp(\tilde{\Lambda}t)=X_i^\infty g \exp(\operatorname{Ad}_{g^{-1}}\Lambda t)=X_i^\infty \exp(\Lambda t)g,\quad i=1,\cdots,N, ~t\ge 0,
\]
as desired.
    
\item[(4)] By (3), we may define the quantities
\[
\Lambda_i = H_i+\frac{\kappa}{N}\sum_{j=1}^N\phi\left(\lim_{t\to\infty}X_j(t)X_i(t)^{-1}\right),\quad i=1,\cdots,N.
\]
(By the discussion of the proof of (3), $\Lambda_i=\operatorname{Ad}_{X_i^\infty}\Lambda$.) 
By (2), we have $|\Lambda_i|\le O_{G,\phi}(\|H\|_\infty)$ for some constant $C$.

Now define 
\[
Z_i(t)=\exp(-\Lambda_it)X_i(t),\quad i=1,\cdots,N.
\]
Then by \eqref{adjexp},
\begin{align*}
(dR_{Z_i^{-1}})_{Z_i}\dot{Z}_i&=e^{-\mathrm{ad}_{\Lambda_i t}}\left(H_i+\frac{\kappa}{N}\sum_{j=1}^N\phi(X_jX_i^{-1})-\Lambda_i\right)\\
&=O_{G,\phi}(e^{O_G(\|H\|_\infty)t-\kappa\lambda t/3}),
\end{align*}
so if $\kappa>\Omega_G(\|H\|_\infty/\lambda)$, $Z_i$ exponentially converges, say to $Z_i^\infty$.

Now
\begin{align*}
d(X_i,\exp(\Lambda_i t)Z_i^\infty)=&d(\exp(\Lambda_i t)Z_i\exp(-\Lambda_i t),\exp(\Lambda_i t)Z_i^\infty\exp(-\Lambda_i t))\\
\le &e^{\|\operatorname{ad}_{\Lambda_i}\|t}d(Z_i,Z_i^\infty) \\
=&e^{O_G(\|H\|_\infty) t}O_{G,\phi}(e^{O_G(\|H\|_\infty)t-\kappa\lambda t/3})\\
\rightarrow & 0,
\end{align*}
where the first equality follows from right-invariance of $d$, and the first inequality follows from \eqref{adjexp} ($\|\operatorname{ad}_{\Lambda_i}\|$ is the operator norm of $\operatorname{ad}_{\Lambda_i}$). As $\exp(\Lambda_i t)Z_i^\infty=Z_i^\infty \exp(\operatorname{Ad}_{(Z_i^\infty)^{-1}}(\Lambda_i)t)$, we have
\begin{align*}
    &d(Z_i^\infty \exp(\operatorname{Ad}_{(Z_i^\infty)^{-1}}(\Lambda_i)t)(Z_j^\infty \exp(\operatorname{Ad}_{(Z_j^\infty)^{-1}}(\Lambda_j)t))^{-1}, \lim_{t\to\infty} X_i(t)X_j(t)^{-1})\\
    &\le d(\exp(\Lambda_i t)Z_i^\infty (\exp(\Lambda_j t)Z_j^\infty)^{-1},\exp(\Lambda_i t)Z_i^\infty X_j(t)^{-1})\\
    &\quad +d(\exp(\Lambda_i t)Z_i^\infty X_j(t)^{-1}, X_i(t)X_j(t)^{-1})\\
    &\quad +d(X_i(t)X_j(t)^{-1},\lim_{t\to\infty} X_i(t)X_j(t)^{-1})\\
    &\le e^{\|\operatorname{ad}_{\Lambda_i}\|t}\|\operatorname{Ad}_{Z_i^\infty}\|d((\exp(\Lambda_j t)Z_j^\infty)^{-1},X_j(t)^{-1})\\
    &\quad +d(\exp(\Lambda_i t)Z_i^\infty, X_i(t)) +d(X_i(t)X_j(t)^{-1},\lim_{t\to\infty} X_i(t)X_j(t)^{-1})\\
    &\le e^{\|\operatorname{ad}_{\Lambda_i}\|t}\|\operatorname{Ad}_{Z_i^\infty}\|e^{\|\operatorname{ad}_{\Lambda_j}\|t}\|\operatorname{Ad}_{Z_j^\infty}\|d(\exp(\Lambda_j t)Z_j^\infty,X_j(t))\\
    &\quad +d(\exp(\Lambda_i t)Z_i^\infty, X_i(t)) +d(X_i(t)X_j(t)^{-1},\lim_{t\to\infty} X_i(t)X_j(t)^{-1}).
\end{align*}
The last two terms clearly converge to zero, while the first term is $O_G(e^{O_G(\|H\|_\infty) t})O_{G,\phi}(e^{O_G(\|H\|_\infty)t-\kappa\lambda t/3})$ and thus converges to zero with large $\kappa$. Thus we have the convergence
\begin{align*}
Z_i^\infty \exp(\operatorname{Ad}_{(Z_i^\infty)^{-1}}(\Lambda_i)t)(Z_j^\infty \exp(\operatorname{Ad}_{(Z_j^\infty)^{-1}}(\Lambda_j)t))^{-1}\to \lim_{t\to\infty} X_i(t)X_j(t)^{-1},\\
 t\to\infty,
\end{align*}
for all $i,j=1\cdots,N$, which is only possible if $Z_i^\infty (Z_j^\infty)^{-1}=\lim_{t\to\infty}X_i(t)X_j(t)^{-1}$ and  $\operatorname{Ad}_{(Z_i^\infty)^{-1}}(\Lambda_i)=\operatorname{Ad}_{(Z_j^\infty)^{-1}}(\Lambda_j)$, $i,j=1,\cdots,N$. If we denote this latter common value by $M\in \mathfrak{g}$, then $\operatorname{Ad}_{Z_i^\infty}M=\Lambda_i=H_i+\frac{\kappa}{N}\sum_j \phi(Z_j^\infty (Z_i^\infty)^{-1})$ satisfies \eqref{pls}. Therefore $\{Z_i^\infty \exp(Mt)\}_{i=1}^N$ is a phase-locked state such that $d(X_i(t),Z_i^\infty \exp(Mt))$ converges exponentially to zero.

\item[(5)] We compute
\begin{align*}
    &(dL_{X_i(t)}^{-1})_e\dot{X}_i(t)-M\\
    &=\operatorname{Ad}_{X_i(t)^{-1}}\left(H_i+\frac{\kappa}{N}\sum_{j=1}^N\phi(X_jX_i^{-1})\right)-M\\
    &= (\operatorname{Ad}_{X_i(t)^{-1}Z_i^\infty\exp(Mt)}-I)\operatorname{Ad}_{(Z_i^\infty\exp(Mt))^{-1}}\left(H_i+\frac{\kappa}{N}\sum_{j=1}^N\phi(X_jX_i^{-1})\right)\\
    &\quad +\operatorname{Ad}_{(Z_i^\infty\exp(Mt))^{-1}}\left(H_i+\frac{\kappa}{N}\sum_{j=1}^N\phi(X_jX_i^{-1})-\Lambda_i\right)\\
    &\quad(\because \operatorname{Ad}_{(Z_i^\infty\exp(Mt))^{-1}}\Lambda_i=\operatorname{Ad}_{\exp(-Mt)}M=M).
\end{align*}
The first term is of the form $O_{G,\phi}(\|H\|e^{(O_G(\|H\|_\infty)-\kappa\lambda/3)t})$ and thus converges to zero. Similarly, the second term is of the form $O(e^{(O_G(\|H\|_\infty)-\kappa\lambda/3)t})$ and thus converges to zero.
\end{itemize}
\end{proof}

\subsection{Reducing from a model on the Lie group to a model on the Lie algebra.}
Our goal now is to prove Theorem \ref{reduction}. Theorem \ref{reduction} roughly says that if the particles $X_i$ are initially close together, they will stay together for all time and will exhibit good stability behavior. However, we have a priori little control on where the whole aggregate will head toward; it is possible that the entire population heads away from its initial position exponentially fast.

Thus, if we restrict our attention to the ratios $X_iX_j^{-1}$ of the particles, we should be able to analyze them in a small neighborhood of the identity.

From \eqref{ratioODE}, we may formulate the Cauchy problem for the relative phases $X_iX_j^{-1}$:
\begin{equation}\label{fractions}
\begin{cases}
\displaystyle
\frac{d}{dt}(X_i X_j^{-1})=(dR_{X_iX_j^{-1}})_e\Bigg(H_i-\operatorname{Ad}_{X_iX_j^{-1}}H_j\\
\qquad\qquad\qquad\qquad\qquad\qquad+\frac{\kappa}{N}\sum_{k=1}^N [\phi(X_k X_i^{-1})-\operatorname{Ad}_{X_iX_j^{-1}}\phi(X_k X_j^{-1})]\Bigg),\\
X_iX_j^{-1}(0)=X_i^0(X_j^0)^{-1},\quad i,j=1,\cdots,N.
\end{cases}
\end{equation}
Again, the standard Cauchy--Lipschitz theory guarantees the existence of a local $C^1$-solution to \eqref{fractions}. Observe that a global solution to \eqref{genLohe} exists if and only if a global solution to \eqref{fractions} exists. Indeed, the only if direction is trivial, and conversely if the $X_iX_j^{-1}$'s are globally defined, we may consider the right-hand side of \eqref{genLohe} to be a time-dependent bounded $C^1$-forcing term, and so by a standard argument (using the fact that $G$ as a manifold is complete) the solutions $X_i(t)$ must exist for all time.

Assuming $X_iX_j^{-1}\in \exp(B_r)$ for all $i,j=1,\cdots,N$, define $Y_{ij}=\log (X_iX_j^{-1})$. We will analyze the $Y_{ij}$'s on $\mathfrak{g}$ instead of the $X_iX_j^{-1}$'s on $G$; we are using $B_r$ as a coordinate chart for $\exp(B_r)$. Now, by \eqref{adjexp} and \eqref{difflog}, the $Y_{ij}$'s are the solution to the Cauchy problem
\begin{equation}\label{logarithms}
\begin{cases}
\displaystyle
\dot{Y}_{ij}=\frac{\operatorname{ad}_{Y_{ij}}}{e^{\operatorname{ad}_{Y_{ij}}}-1}H_i-\frac{\operatorname{ad}_{Y_{ij}}}{1-e^{-\operatorname{ad}_{Y_{ij}}}}H_j\\
\qquad\quad+\frac{\kappa}{N}\sum_{k=1}^N \left[\frac{\operatorname{ad}_{Y_{ij}}}{e^{\operatorname{ad}_{Y_{ij}}}-1}\phi\circ\exp(Y_{ki})-\frac{\operatorname{ad}_{Y_{ij}}}{1-e^{-\operatorname{ad}_{Y_{ij}}}}\phi\circ\exp(Y_{kj})\right],\\
Y_{ij}(0)=\log(X_i^0(X_j^0)^{-1}),\quad i,j=1,\cdots,N.
\end{cases}
\end{equation}
Once again, the standard Cauchy--Lipschitz theory guarantees the existence of a local $C^1$-solution to \eqref{logarithms}. This time, we can only observe that the existence of a global solution to \eqref{logarithms} guarantees the existence of a global solution to \eqref{fractions}, but not the other way around.

Note that, by the Baker-Campbell-Hausdorff formula, the $Y_{ij}$'s satisfy the compatibility equations
\begin{equation}\label{compatibility}
    Y_{ji}=-Y_{ij}, \quad Y_{ik}=Y_{ij}+Y_{jk}+a(Y_{ij},Y_{jk}),\quad i,j,k=1,\cdots,N
\end{equation}
for all times of their existence.

We now translate Theorem \ref{reduction}, a statement about the $X_i$'s, into a statement about the $Y_{ij}$'s.

\begin{theorem}\label{Ymainthm}
There exist a universal constant $C>0$, a sufficiently large constant $c>0$ and a sufficiently small constant $c_1>0$ depending on $G$, $\mathfrak{g}$, and $\phi$ such that if
\begin{equation}
    \kappa>c \|H\|_\infty
\end{equation}
then the following holds.
\begin{enumerate}[(1)]
    \item (Local stability)  If
    \[
    |Y_{ij}(0)|<c_1,\quad\forall i,j=1,\cdots,N,
    \]
    then there exists a unique global solution $Y(t)=\{Y_{ij}(t)\}_{i,j=1}^N$ to \eqref{logarithms} satisfying
    \[
    |Y_{ij}(t)|<c_1,~t\ge 0,\quad \forall i,j=1,\cdots,N,
    \]
    and
    \[
    \limsup_{t\to \infty}|Y_{ij}(t)|\le \frac{C\|H\|_\infty}{\kappa\lambda },\quad \forall i,j=1,\cdots,N.
    \]
    \item (Convergence of two flows) Let $\tilde{Y}(t)=\{\tilde{Y}_{ij}(t)\}_{i,j=1}^N$ be a solution to \eqref{logarithms} with initial data $\{\tilde{Y}_{ij}^0\}_{i,j=1}^N$ satisfying the compatibility equations \eqref{compatibility} and
    \[
        |\tilde{Y}_{ij}^0|<c_1,\quad \forall i,j=1,\cdots,N.
    \]
    Then
    \[
        |Y_{ij}(t)-\tilde{Y}_{ij}(t)|\le |Y_{ij}^0-\tilde{Y}_{ij}^0|\cdot e^{-\kappa \lambda t/3},\quad t\ge 0.
    \]
    
\end{enumerate}
\end{theorem}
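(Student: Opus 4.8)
The plan is to prove both parts by Gronwall inequalities for a single scalar Lyapunov functional measuring the spread of the configuration, all on the coordinate chart $B_r$; the whole argument hinges on a bootstrap that keeps every $Y_{ij}(t)$ inside $B_{c_1}\subset B_r$, which simultaneously validates the calculus identities of Section \ref{sec:Lie} and upgrades the local Cauchy--Lipschitz solution to a global one. (I take the initial data to satisfy the compatibility relations \eqref{compatibility}, as they do whenever they arise from a configuration on $G$; compatibility is then preserved by the flow \eqref{logarithms}.) Before starting I would record the one consequence of \eqref{Hypo} that is used beyond the coercivity \eqref{pos}: since $\phi$ is $C^1$, $\phi(e)=0$, and $(d\exp)_0=\operatorname{id}_{\mathfrak{g}}$, the map $\phi\circ\exp$ is $C^1$ with differential $(d\phi)_e$ at $0$, so there is a modulus $\omega$ with $\omega(\rho)\to0$ as $\rho\to0^+$ and
\[
\phi\circ\exp(v)-\phi\circ\exp(w)=(d\phi)_e(v-w)+O(\omega(r)\,|v-w|),\qquad v,w\in B_r,
\]
so in particular $\phi\circ\exp(v)=(d\phi)_e(v)+O(\omega(|v|)\,|v|)$. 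It is this ``only $o(|v|)$'' (not $O(|v|^2)$) linearization that forces $c_1$ to be chosen so small that $\omega(c_1)$ is negligible against $\lambda$.

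\textbf{Part (1).} On the maximal interval on which all $Y_{ij}\in B_{c_1}$, set $D(t)=\max_{i,j}|Y_{ij}(t)|$, a locally Lipschitz function, and estimate its upper Dini derivative: fixing a pair $(i,j)$ realizing the maximum at $t$, $D^+D(t)\le|Y_{ij}|^{-1}\langle Y_{ij},\dot Y_{ij}\rangle$. In \eqref{logarithms} the Hamiltonian contribution is $H_i-H_j+O(|Y_{ij}|\,\|H\|_\infty)$ by \eqref{funccalc1}; for the interaction contribution I would replace the operator coefficients by $\operatorname{id}_{\mathfrak{g}}+O(|Y_{ij}|)$ using \eqref{funccalc1}, linearize $\phi\circ\exp$ by the display above, and use \eqref{compatibility} together with \eqref{a-bound} in the form $Y_{ki}-Y_{kj}=-Y_{ij}+O(|Y_{ij}|\,|Y_{jk}|)$, which collapses $\frac{\kappa}{N}\sum_k[\phi\circ\exp(Y_{ki})-\phi\circ\exp(Y_{kj})]$ to $-\kappa(d\phi)_e(Y_{ij})+O\big(\kappa(D+\omega(D))D\big)$. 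Pairing with $Y_{ij}$ and applying \eqref{pos} gives
\[
D^+D(t)\le 2\|H\|_\infty+O(D\,\|H\|_\infty)-\kappa\lambda D+O\big(\kappa(D+\omega(D))D\big).
\]
Choosing $c_1$ so small that the last error is $\le\frac{\kappa\lambda}{2}D$ when $D<c_1$, then $c$ so large that $\kappa>c\|H\|_\infty$ makes $O(D\,\|H\|_\infty)\le\frac{\kappa\lambda}{8}D$ and $\frac{8\|H\|_\infty}{\kappa\lambda}<c_1$, we get $D^+D\le 2\|H\|_\infty-\frac{\kappa\lambda}{4}D$; comparison with $\dot z=2\|H\|_\infty-\frac{\kappa\lambda}{4}z$ yields $D(t)\le\max\big(D(0),\frac{8\|H\|_\infty}{\kappa\lambda}\big)<c_1$ and $\limsup_{t\to\infty}D(t)\le\frac{8\|H\|_\infty}{\kappa\lambda}$. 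Hence $\{t:D(t)<c_1\}$ is relatively open and closed, so $D<c_1$ throughout; the solution stays in a fixed compact subset of $\mathfrak{g}^{N^2}$ where the right-hand side of \eqref{logarithms} is bounded, so it is global, and unique by Cauchy--Lipschitz.

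\textbf{Part (2).} Put $E_{ij}=Y_{ij}-\tilde Y_{ij}$, $E(t)=\max_{i,j}|E_{ij}(t)|$; by Part (1) both flows remain in $B_{c_1}$. Subtracting the two copies of \eqref{logarithms} and estimating term by term — using \eqref{funccalc2} for the differences of the functional-calculus coefficients ($O(|E_{ij}|)$ as operators, contributing $O(\|H\|_\infty|E_{ij}|)$ and $O(\kappa c_1|E_{ij}|)$), the linearization display for $\phi\circ\exp(Y_{ki})-\phi\circ\exp(\tilde Y_{ki})=(d\phi)_e(E_{ki})+O(\omega(c_1)|E_{ki}|)$, and \eqref{compatibility} with \eqref{a-bound} to get $E_{ki}-E_{kj}=-E_{ij}+O(c_1E)$ — one finds the two halves of the interaction term combine: the sums $\frac{\kappa}{N}\sum_k(d\phi)_e(E_{ki})$ and $\frac{\kappa}{N}\sum_k(d\phi)_e(E_{kj})$ cancel modulo $-\kappa(d\phi)_e(E_{ij})$ and a $O(\kappa c_1 E)$ error, leaving
\[
\dot E_{ij}=-\kappa(d\phi)_e(E_{ij})+O\big((\|H\|_\infty+\kappa c_1)|E_{ij}|\big)+O\big(\kappa(c_1+\omega(c_1))E\big).
\]
Pairing with $E_{ij}$ at a maximizing index and invoking \eqref{pos} gives $D^+E(t)\le\big(-\kappa\lambda+O(\|H\|_\infty)+O(\kappa(c_1+\omega(c_1)))\big)E(t)$; shrinking $c_1$ and enlarging $c$ once more so the error is $\le\frac{2\kappa\lambda}{3}E$, Gronwall gives $E(t)\le E(0)e^{-\kappa\lambda t/3}$, which is the asserted decay.

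\textbf{Main obstacle.} The real work is the bookkeeping in Part (2): one must check that every nonlinear correction is dominated by the single coercive term $-\kappa\lambda|E_{ij}|^2$ once $c_1$ is small and $\kappa/\|H\|_\infty$ large — in particular the $o(|Y|)$ linearization error of $\phi\circ\exp$ (this is where $C^1$, not $C^2$, regularity matters) and the quadratic Baker--Campbell--Hausdorff defect $a(\cdot,\cdot)$ — and the cancellation between $\frac{\kappa}{N}\sum_k(d\phi)_e(E_{ki})$ and $\frac{\kappa}{N}\sum_k(d\phi)_e(E_{kj})$, which rests on the cocycle structure \eqref{compatibility}, is what keeps the estimate closable. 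A secondary but essential point is the relatively-open-and-closed (bootstrap) argument of Part (1) keeping the $Y_{ij}$ inside $B_{c_1}$, since every calculus identity used lives on that chart; this is exactly what converts the a priori merely local solution into a global one.
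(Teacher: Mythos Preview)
Your proposal is correct and follows essentially the same route as the paper: the paper packages the two estimates as Lemma \ref{FirstGronwall} and Lemma \ref{SecondGronwall} for the same Lyapunov functionals $\|Y\|_\infty$ and $\|Y-\tilde Y\|_\infty$, using exactly the ingredients you list (the functional-calculus bounds \eqref{funccalc1}--\eqref{funccalc2}, the $C^1$ linearization of $\phi\circ\exp$, the BCH cocycle \eqref{compatibility} with \eqref{a-bound}, and the coercivity \eqref{pos}), and then runs the identical Gronwall/exit-time argument. Your explicit modulus $\omega$ is just the paper's $o_{G,\phi}(\cdot)$ made concrete; note that, as in the paper, the differential inequality only yields the decay for $\max_{i,j}|Y_{ij}-\tilde Y_{ij}|$ rather than literally per index pair.
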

It is clear that Theorem \ref{Ymainthm} implies Theorem \ref{reduction}, so our goal is now to prove Theorem \ref{Ymainthm}.

\subsection{Two Gronwall inequalities.}
The proof of Theorem \ref{Ymainthm} depends on two Gronwall inequalities for two Lyapunov functionals for $Y$, each respectively corresponding to statements (1) and (2) of Theorem \ref{Ymainthm}.

First, we consider the Lyapunov functional
\begin{equation}\label{Lyapunov1}
    \|Y\|_\infty=\max_{i,j=1,\cdots,N}|Y_{ij}|.
\end{equation}

\begin{lemma}[First Gronwall inequality]\label{FirstGronwall}
Let $Y(t)=\{Y_{ij}(t)\}_{i,j=1}^N$, $t\in [0,T)$, be a solution to \eqref{logarithms}, with $Y_{ij}(t)\in B_r$ for all $i,j=1,\cdots,N$ and $t\in [0,T)$. Then
\begin{equation}\label{firstGronwallprim}
    \dot{Y}_{ij}=-\kappa \left[d\phi_e(Y_{ij})+o_{G,\phi}(\|Y\|_\infty)\right]+O(\|H\|_\infty),\quad i,j=1,\cdots,N,
\end{equation}
and thus
\begin{equation}\label{firstGronwall}
    \left.\frac{d}{dt}\right|^+\|Y\|_\infty\le -\kappa \left[\lambda \|Y\|_\infty+o_{G,\phi}(\|Y\|_\infty)\right]+O(\|H\|_\infty),
\end{equation}
where $\left.\frac{d}{dt}\right|^+$ denotes the upper right Dini derivative, and the little $o$ notation is with respect to $\|Y\|_\infty\to 0$ and whose constants depend on $G$, $\mathfrak{g}$, and $\phi$.
\end{lemma}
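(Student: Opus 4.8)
The plan is to compute $\dot Y_{ij}$ directly from the governing equation \eqref{logarithms}, isolate the dominant linear term $-\kappa\,(d\phi)_e(Y_{ij})$, and show that every other contribution is either $O(\|H\|_\infty)$ or $\kappa\cdot o_{G,\phi}(\|Y\|_\infty)$ as $\|Y\|_\infty\to 0$. First I would handle the $H$-terms: by \eqref{funccalc1}, $\frac{\operatorname{ad}_{Y_{ij}}}{e^{\operatorname{ad}_{Y_{ij}}}-1}H_i=H_i+O(|Y_{ij}|\,|H_i|)$ and similarly for the $H_j$-term, so their combined contribution is $H_i-H_j+O(\|Y\|_\infty\|H\|_\infty)=O(\|H\|_\infty)$ provided $\|Y\|_\infty$ stays bounded (which it does on $B_r$). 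Next, the interaction sum: write $\phi\circ\exp(Y_{ki})=(d\phi)_e(Y_{ki})+o_{G,\phi}(|Y_{ki}|)$ using $\phi(e)=0$ and $C^1$-ness of $\phi\circ\exp$ at $0$ (here is where hypothesis \eqref{Hypo} enters, but only through differentiability at the identity). Then apply \eqref{funccalc1} again to replace $\frac{\operatorname{ad}_{Y_{ij}}}{e^{\operatorname{ad}_{Y_{ij}}}-1}$ by $\operatorname{id}_\mathfrak g+O(|Y_{ij}|)$, so that
\[
\frac{\operatorname{ad}_{Y_{ij}}}{e^{\operatorname{ad}_{Y_{ij}}}-1}\phi\circ\exp(Y_{ki})=(d\phi)_e(Y_{ki})+o_{G,\phi}(\|Y\|_\infty),
\]
and likewise for the $Y_{kj}$-term. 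Subtracting, the $k$-th summand becomes $(d\phi)_e(Y_{ki}-Y_{kj})+o_{G,\phi}(\|Y\|_\infty)$, and by the compatibility relation \eqref{compatibility}, $Y_{ki}-Y_{kj}=-Y_{ij}+a(Y_{kj},Y_{ji})$ (or the analogous BCH identity), where $a(Y_{kj},Y_{ji})=O_G(\|Y\|_\infty^2)=o_{G,\phi}(\|Y\|_\infty)$ by \eqref{a-bound}. Hence each summand equals $-(d\phi)_e(Y_{ij})+o_{G,\phi}(\|Y\|_\infty)$, and averaging over $k$ and multiplying by $\kappa$ yields exactly \eqref{firstGronwallprim}.

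For \eqref{firstGronwall}, I would fix a time $t$ at which $\|Y\|_\infty$ is attained by some pair $(i_0,j_0)$ (standard Dini-derivative / Danskin argument for a max of smooth functions): the upper right Dini derivative of $\|Y\|_\infty=\max_{ij}|Y_{ij}|$ is bounded by $\frac{d}{dt}|Y_{i_0 j_0}|=\frac{\langle Y_{i_0j_0},\dot Y_{i_0j_0}\rangle}{|Y_{i_0j_0}|}$ whenever $Y_{i_0j_0}\neq 0$. Plugging in \eqref{firstGronwallprim} and using the coercivity \eqref{pos}, namely $\langle Y_{i_0j_0},(d\phi)_e(Y_{i_0j_0})\rangle\ge\lambda|Y_{i_0j_0}|^2$, together with Cauchy--Schwarz on the error and $H$ terms and the fact that $|Y_{i_0j_0}|=\|Y\|_\infty$, gives
\[
\left.\frac{d}{dt}\right|^+\|Y\|_\infty\le -\kappa\lambda\|Y\|_\infty+\kappa\, o_{G,\phi}(\|Y\|_\infty)+O(\|H\|_\infty),
\]
which is \eqref{firstGronwall}. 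The case $Y_{i_0j_0}=0$ (i.e.\ $\|Y\|_\infty=0$ momentarily) is trivial since then the right side is $O(\|H\|_\infty)\ge 0$ and the Dini derivative of a nonnegative function vanishing at $t$ is $\le$ the norm of $\dot Y$, again $O(\|H\|_\infty)$.

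The only genuinely delicate point is the bookkeeping of the $o_{G,\phi}(\|Y\|_\infty)$ error terms: one must be careful that the "little-$o$" is uniform in the indices $i,j,k$ and that products like $O(|Y_{ij}|)\cdot(d\phi)_e(Y_{ki})$ coming from the expansion of $\frac{\operatorname{ad}_{Y_{ij}}}{e^{\operatorname{ad}_{Y_{ij}}}-1}$ are genuinely $o(\|Y\|_\infty)$ and not merely $O(\|Y\|_\infty)$ — this is fine because they are $O(\|Y\|_\infty^2)$, but it should be stated. The other bit of care is that the argument presupposes $Y_{ij}(t)\in B_r$ so that \eqref{funccalc1}, \eqref{funccalc2}, \eqref{a-bound} and the functional calculus identities are all valid; this is exactly the hypothesis of the lemma, so there is no circularity at this stage (the bootstrap that keeps $Y$ inside $B_r$ will be carried out later using \eqref{firstGronwall}). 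I do not expect any serious obstacle beyond this routine uniformity check; the structural input — local attractivity \eqref{pos}, the BCH cancellation $Y_{ki}-Y_{kj}\approx -Y_{ij}$, and smallness of the $H$-contribution — is all already in place.
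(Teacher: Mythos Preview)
Your proposal is correct and follows essentially the same approach as the paper: expand $\dot Y_{ij}$ using \eqref{funccalc1} and the $C^1$-Taylor expansion $\phi\circ\exp(Y)=(d\phi)_e(Y)+o(|Y|)$, then use the BCH compatibility \eqref{compatibility} to collapse $Y_{ki}-Y_{kj}$ to $-Y_{ij}+O(\|Y\|_\infty^2)$, yielding \eqref{firstGronwallprim}, and finally invoke \eqref{pos} for \eqref{firstGronwall}. Your writeup is in fact more explicit than the paper's about the Dini-derivative step and the uniformity of the $o$-terms, but there is no substantive difference in strategy.
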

\begin{remark}
Intuitively, since the original model \eqref{genLohe} is locally attractive, it is expected and unsurprising that a measure such as $\|Y\|_\infty$ for the diameter of a concentrated population should stay small; the `first-order force' $-\kappa d\phi_e(Y_{ij})$ in \eqref{firstGronwallprim} represents this local attraction.
\end{remark}
\begin{proof}{\bf (Proof of Lemma \ref{FirstGronwall}.)}
By functional calculus \eqref{funccalc1},
\begin{equation*}
\frac{\operatorname{ad}_{Y_{ij}}}{e^{\operatorname{ad}_{Y_{ij}}}-1}, \frac{\operatorname{ad}_{Y_{ij}}}{1-e^{-\operatorname{ad}_{Y_{ij}}}}=\operatorname{id}_\mathfrak{g}+O(|Y_{ij}|),
\end{equation*}
and
\[
\phi\circ\exp(Y_{ij}) = d\phi_e(Y_{ij})+o_{G,\phi}(|Y_{ij}|),
\]
so by \eqref{logarithms}
\[
\dot{Y}_{ij}=O(\|H\|_\infty)+\frac{\kappa}{N}\sum_{k=1}^N \left[d\phi_e(Y_{ki})-d\phi_e(Y_{kj})+o_{G,\phi}(\|Y\|_\infty)\right].
\]
Applying the Baker-Campbell-Hausdorff formula \eqref{compatibility},
\[
Y_{ki}-Y_{kj}\stackrel{\eqref{compatibility}}{=}-Y_{ij}-a(Y_{jk},Y_{ki})\stackrel{\eqref{a-bound}}{=}-Y_{ij}+O_G(\|Y\|_\infty^2),
\]
so
\[
\dot{Y}_{ij}=O(\|H\|_\infty)-\kappa\left[\phi_e(Y_{ij})+o_{G,\phi}(\|Y\|_\infty)\right]
\]
which is \eqref{firstGronwallprim}. Now \eqref{firstGronwall} follows from \eqref{firstGronwallprim} by \eqref{pos}.
\end{proof}

This Gronwall inequality allows us to prove the first part of Theorem \ref{Ymainthm}.
\begin{proof}{\bf (Proof of Theorem \ref{Ymainthm} (1).)}
Let $c_1$ be small enough so that $|Y_{ij}|<c_1$, $i,j=1,\cdots,N$, guarantees that $Y_{ij}\in B_r$ and that the $o(\|Y\|_\infty)$ in \eqref{firstGronwall} is smaller than $\frac{\lambda}{2}\|Y\|_\infty$. Then \eqref{firstGronwall} becomes
\[
\left.\frac{d}{dt}\right|^+\|Y(t)\|_\infty\le -\frac{\kappa \lambda}{2} \|Y(t)\|_\infty+C\|H\|_\infty
\]
for some absolute constant $C$, for all times $t\ge 0$ such that $\|Y(t)\|<c_1$. If $\kappa$ is large enough so that
\[
\frac{2C\|H\|_\infty}{\kappa\lambda}<c_1,
\]
then by a standard exit-time argument, $Y(t)<c_1$ for all $t\ge 0$ and $\limsup_{t\to \infty}\|Y(t)\|_\infty \le \frac{2C\|H\|_\infty}{\kappa\lambda}$.
\end{proof}

Our second Lyapunov functional measures the maximal mismatch between the two flows:
\begin{equation}\label{Lyapunov2}
    \|Y-\tilde{Y}\|_\infty=\max_{i,j=1,\cdots,N}|Y_{ij}-\tilde{Y}_{ij}|.
\end{equation}

\begin{lemma}[Second Gronwall inequality]\label{SecondGronwall}
Let $Y(t)=\{Y_{ij}(t)\}_{i,j=1}^N$ and $\tilde{Y}(t)=\{\tilde{Y}_{ij}(t)\}_{i,j=1}^N$, $t\in [0,T)$ be solutions to \eqref{logarithms} with initial data $\{Y_{ij}^0\}_{i,j=1}^N$ and $\{\tilde{Y}_{ij}^0\}_{i,j=1}^N$, respectively, with $Y_{ij}(t),\tilde{Y}_{ij}(t)\in B_r$ for all $i,j=1,\cdots,N$ and $t\in [0,T)$. Then
\begin{equation}\label{secondGronwallprim}
    \dot{Y}_{ij}-\dot{\tilde{Y}}_{ij}=-\kappa \left[d\phi_e(Y_{ij}-\tilde{Y}_{ij})+o_{G,\phi}(\|Y-\tilde{Y}\|_\infty)\right]+O(\|Y-\tilde{Y}\|_\infty\cdot \|H\|_\infty),
\end{equation}
and thus
\begin{equation}\label{secondGronwall}
    \left.\frac{d}{dt}\right|^+ \|Y-\tilde{Y}\|_\infty \le -\kappa \left[\lambda \|Y-\tilde{Y}\|_\infty+o_{G,\phi}(\|Y-\tilde{Y}\|_\infty)\right]+O(\|Y-\tilde{Y}\|_\infty\cdot \|H\|_\infty),
\end{equation}
where the little o notation is with respect to $\|Y\|,\|\tilde{Y}\|\to 0$.
\end{lemma}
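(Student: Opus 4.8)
The plan is to run the proof of Lemma \ref{FirstGronwall} almost verbatim, with every ``absolute'' estimate used there for $\dot{Y}_{ij}$ replaced by the corresponding ``difference'' (Lipschitz-type) estimate for $\dot{Y}_{ij}-\dot{\tilde{Y}}_{ij}$. Concretely, I would subtract the two copies of \eqref{logarithms} governing $Y_{ij}$ and $\tilde{Y}_{ij}$ and estimate the resulting expression group by group.

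For the $H$-terms: in $\tfrac{\operatorname{ad}_{Y_{ij}}}{e^{\operatorname{ad}_{Y_{ij}}}-1}H_i-\tfrac{\operatorname{ad}_{\tilde{Y}_{ij}}}{e^{\operatorname{ad}_{\tilde{Y}_{ij}}}-1}H_i$ the vector $H_i$ cancels, leaving $\big(\tfrac{\operatorname{ad}_{Y_{ij}}}{e^{\operatorname{ad}_{Y_{ij}}}-1}-\tfrac{\operatorname{ad}_{\tilde{Y}_{ij}}}{e^{\operatorname{ad}_{\tilde{Y}_{ij}}}-1}\big)H_i=O(|Y_{ij}-\tilde{Y}_{ij}|\,|H_i|)=O(\|Y-\tilde{Y}\|_\infty\|H\|_\infty)$ by the functional-calculus Lipschitz bound \eqref{funccalc2}, and similarly for the $H_j$-term. (It is precisely this cancellation of $H_i$ that supplies the extra factor $\|Y-\tilde{Y}\|_\infty$ in front of $\|H\|_\infty$; combined with the $-\kappa\lambda\|Y-\tilde{Y}\|_\infty$ main term it is what forces exponential decay at rate $\Omega(\kappa\lambda)$ once $\kappa\gg\|H\|_\infty$.) For the $\phi$-terms I would split each summand in telescoping fashion as
\[
\Big(\tfrac{\operatorname{ad}_{Y_{ij}}}{e^{\operatorname{ad}_{Y_{ij}}}-1}-\tfrac{\operatorname{ad}_{\tilde{Y}_{ij}}}{e^{\operatorname{ad}_{\tilde{Y}_{ij}}}-1}\Big)\phi\circ\exp(Y_{ki})+\tfrac{\operatorname{ad}_{\tilde{Y}_{ij}}}{e^{\operatorname{ad}_{\tilde{Y}_{ij}}}-1}\big(\phi\circ\exp(Y_{ki})-\phi\circ\exp(\tilde{Y}_{ki})\big)
\]
(and analogously for the $kj$ summand, with the prefactor $\tfrac{\operatorname{ad}}{1-e^{-\operatorname{ad}}}$). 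In the first piece, \eqref{funccalc2} gives $O(\|Y-\tilde{Y}\|_\infty)$ while $\phi\circ\exp(Y_{ki})=O(\|Y\|_\infty)$ (since $\phi(e)=0$ and $\phi\in C^1$), so this piece is $O(\|Y\|_\infty\|Y-\tilde{Y}\|_\infty)=o_{G,\phi}(\|Y-\tilde{Y}\|_\infty)$ as $\|Y\|\to0$. In the second piece, \eqref{funccalc1} writes the prefactor as $\operatorname{id}_{\mathfrak g}+O(|\tilde{Y}_{ij}|)$, and since $\phi\circ\exp$ is $C^1$ with $d(\phi\circ\exp)_0=d\phi_e$, the mean value theorem gives $\phi\circ\exp(Y_{ki})-\phi\circ\exp(\tilde{Y}_{ki})=d\phi_e(Y_{ki}-\tilde{Y}_{ki})+o_{G,\phi}(\|Y-\tilde{Y}\|_\infty)$ (little $o$ as $\|Y\|,\|\tilde{Y}\|\to0$), so multiplying out, the second piece equals $d\phi_e(Y_{ki}-\tilde{Y}_{ki})+o_{G,\phi}(\|Y-\tilde{Y}\|_\infty)$.

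It remains to collapse the $k$-sum. Invoking the compatibility relations \eqref{compatibility} for \emph{both} $Y$ and $\tilde{Y}$, I would write $Y_{ki}-Y_{kj}=-Y_{ij}+a(Y_{kj},Y_{ji})$ and $\tilde{Y}_{ki}-\tilde{Y}_{kj}=-\tilde{Y}_{ij}+a(\tilde{Y}_{kj},\tilde{Y}_{ji})$, subtract, and apply the Lipschitz bound for $a$ from \eqref{a-bound}, namely $a(Y_{kj},Y_{ji})-a(\tilde{Y}_{kj},\tilde{Y}_{ji})=O\big((\|Y\|_\infty+\|\tilde{Y}\|_\infty)\|Y-\tilde{Y}\|_\infty\big)=o_G(\|Y-\tilde{Y}\|_\infty)$. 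Hence $(Y_{ki}-\tilde{Y}_{ki})-(Y_{kj}-\tilde{Y}_{kj})=-(Y_{ij}-\tilde{Y}_{ij})+o_G(\|Y-\tilde{Y}\|_\infty)$; applying the bounded operator $d\phi_e$ and averaging over $k$ delivers \eqref{secondGronwallprim}. Finally \eqref{secondGronwall} follows from \eqref{secondGronwallprim} by the standard envelope argument: at each $t$ choose $(i,j)$ attaining the maximum (if the maximum vanishes then $Y\equiv\tilde{Y}$ by uniqueness), so that $\left.\frac{d}{dt}\right|^+\|Y-\tilde{Y}\|_\infty\le\big\langle\tfrac{Y_{ij}-\tilde{Y}_{ij}}{|Y_{ij}-\tilde{Y}_{ij}|},\dot{Y}_{ij}-\dot{\tilde{Y}}_{ij}\big\rangle$; bound the $-\kappa d\phi_e$ term by $-\kappa\lambda\|Y-\tilde{Y}\|_\infty$ using \eqref{pos} and the remaining terms by Cauchy--Schwarz.

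I do not anticipate a genuine obstacle: this is the routine ``difference version'' of Lemma \ref{FirstGronwall}. The only point demanding attention is the bookkeeping --- one must verify that every error term carries a \emph{product} of a diameter ($\|Y\|_\infty$ or $\|\tilde{Y}\|_\infty$, or a modulus of continuity of $d(\phi\circ\exp)$ or of $a$ near $0$) with a mismatch $\|Y-\tilde{Y}\|_\infty$, so that it is genuinely $o_{G,\phi}(\|Y-\tilde{Y}\|_\infty)$ (or $O(\|Y-\tilde{Y}\|_\infty\|H\|_\infty)$) and not merely $O(\|Y-\tilde{Y}\|_\infty)$. The inputs that make this work are the Lipschitz-type bounds \eqref{funccalc2} and the second half of \eqref{a-bound} together with the $C^1$-regularity of $\phi$; as in the rest of the paper, no $C^2$ information and no curvature of $G$ is needed.
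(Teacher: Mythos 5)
Your proposal is correct and takes essentially the same route as the paper's proof: subtract the two copies of \eqref{logarithms}, telescope each product so that every error term carries a factor of $\|Y-\tilde{Y}\|_\infty$ via the Lipschitz bounds \eqref{funccalc2}, the $C^1$-regularity of $\phi\circ\exp$, and the second half of \eqref{a-bound} applied to the compatibility relations \eqref{compatibility}. The only (immaterial) difference is which factor in each telescoped product you evaluate at $Y$ versus $\tilde{Y}$, and you additionally spell out the Dini-derivative envelope argument that the paper leaves implicit in passing from \eqref{secondGronwallprim} to \eqref{secondGronwall}.
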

\begin{remark}
Observe that \eqref{secondGronwallprim} is not the difference of \eqref{firstGronwallprim} for $Y$ and $\tilde{Y}$. One can accurately guess the leading term $-\kappa d\phi_e(Y_{ij}-\tilde{Y}_{ij})$ in this manner, but cannot get the correct error terms. It is crucial that the error terms are controlled in terms of $\|Y-\tilde{Y}\|$, as opposed to simply $\|Y\|$ and $\|\tilde{Y}\|$, in order to use \eqref{secondGronwall} to fruition; this fact is the core mathematical content of the proof of Theorem \ref{reduction} (2).
\end{remark}
\begin{proof}{\bf (Proof of Lemma \ref{secondGronwall}.)}
We expand
\begin{align*}
 \dot{Y}_{ij}-\dot{\tilde{Y}}_{ij}
    &=\frac{\operatorname{ad}_{Y_{ij}}}{e^{\operatorname{ad}_{Y_{ij}}}-1}H_i-\frac{\operatorname{ad}_{Y_{ij}}}{1-e^{-\operatorname{ad}_{Y_{ij}}}}H_j\\
&\quad+\frac{\kappa}{N}\sum_{k=1}^N \left[\frac{\operatorname{ad}_{Y_{ij}}}{e^{\operatorname{ad}_{Y_{ij}}}-1}\phi\circ\exp(Y_{ki})-\frac{\operatorname{ad}_{Y_{ij}}}{1-e^{-\operatorname{ad}_{Y_{ij}}}}\phi\circ\exp(Y_{kj})\right]\\
    &\quad -\frac{\operatorname{ad}_{\tilde{Y}_{ij}}}{e^{\operatorname{ad}_{\tilde{Y}_{ij}}}-1}H_i+\frac{\operatorname{ad}_{\tilde{Y}_{ij}}}{1-e^{-\operatorname{ad}_{\tilde{Y}_{ij}}}}H_j\\
&\quad-\frac{\kappa}{N}\sum_{k=1}^N \left[\frac{\operatorname{ad}_{\tilde{Y}_{ij}}}{e^{\operatorname{ad}_{\tilde{Y}_{ij}}}-1}\phi\circ\exp(\tilde{Y}_{ki})-\frac{\operatorname{ad}_{\tilde{Y}_{ij}}}{1-e^{-\operatorname{ad}_{\tilde{Y}_{ij}}}}\phi\circ\exp(\tilde{Y}_{kj})\right]\\
    &= \left(\frac{\operatorname{ad}_{Y_{ij}}}{e^{\operatorname{ad}_{Y_{ij}}}-1}-\frac{\operatorname{ad}_{\tilde{Y}_{ij}}}{e^{\operatorname{ad}_{\tilde{Y}_{ij}}}-1}\right)\left(H_i+\frac{\kappa}{N}\sum_{k=1}^N \phi\circ\exp(\tilde{Y}_{ki})\right)\\
    &\quad -\left(\frac{\operatorname{ad}_{Y_{ij}}}{1-e^{-\operatorname{ad}_{Y_{ij}}}}-\frac{\operatorname{ad}_{\tilde{Y}_{ij}}}{1-e^{-\operatorname{ad}_{\tilde{Y}_{ij}}}}\right)\left(H_j+\frac{\kappa}{N}\sum_{k=1}^N \phi\circ\exp(\tilde{Y}_{kj})\right)\\
    &\quad +\frac{\kappa}{N}\sum_{k=1}^N \Bigg[\frac{\operatorname{ad}_{Y_{ij}}}{e^{\operatorname{ad}_{Y_{ij}}}-1}\Big(\phi\circ\exp(Y_{ki})-\phi\circ\exp(\tilde{Y}_{ki})\Big)\\
&\qquad\qquad\qquad-\frac{\operatorname{ad}_{Y_{ij}}}{1-e^{-\operatorname{ad}_{Y_{ij}}}}\Big(\phi\circ\exp(Y_{kj})-\phi\circ\exp(\tilde{Y}_{kj})\Big)\Bigg].
\end{align*}
By functional calculus \eqref{funccalc2},
\[
    \frac{\operatorname{ad}_{Y_{ij}}}{e^{\operatorname{ad}_{Y_{ij}}}-1}-\frac{\operatorname{ad}_{\tilde{Y}_{ij}}}{e^{\operatorname{ad}_{\tilde{Y}_{ij}}}-1}, \frac{\operatorname{ad}_{Y_{ij}}}{1-e^{-\operatorname{ad}_{Y_{ij}}}}-\frac{\operatorname{ad}_{\tilde{Y}_{ij}}}{1-e^{-\operatorname{ad}_{\tilde{Y}_{ij}}}}= O(|Y_{ij}-\tilde{Y}_{ij}|),
\]
and
\begin{align*}
    \phi\circ\exp(Y_{ij})-\phi\circ\exp(\tilde{Y}_{ij})=&(d\phi_{e}\circ d\exp_0)(Y_{ij}-\tilde{Y}_{ij})+o_{G,\phi}(|Y_{ij}-\tilde{Y}_{ij}|)\\
    =&d\phi_{e}(Y_{ij}-\tilde{Y}_{ij})+o_{G,\phi}(|Y_{ij}-\tilde{Y}_{ij}|).
\end{align*}
So we may simplify
\begin{align*}
    \dot{Y}_{ij}-\dot{\tilde{Y}}_{ij}    &=O(\|Y-\tilde{Y}\|_\infty(\|H\|_\infty+\kappa\|Y\|_\infty+\kappa\|\tilde{Y}\|_\infty))\\
    &\quad +\frac{\kappa}{N}\sum_{k=1}^N \left[\Big(\phi\circ\exp(Y_{ki})-\phi\circ\exp(\tilde{Y}_{ki})\Big)-\Big(\phi\circ\exp(Y_{kj})-\phi\circ\exp(\tilde{Y}_{kj})\Big)\right]\\
    &=O(\|Y-\tilde{Y}\|_\infty\|H\|_\infty)+\kappa\cdot o_G(\|Y-\tilde{Y}\|_\infty)\\
    &\quad +\frac{\kappa}{N}\sum_{k=1}^N \left[d\phi_e(Y_{ki}-\tilde{Y}_{ki}-Y_{kj}+\tilde{Y}_{kj})+o_{G,\phi}(\|Y-\tilde{Y}\|_\infty)\right].
\end{align*}
Again, by the Baker-Campbell-Hausdorff formula \eqref{compatibility},
\begin{align*}
    Y_{ki}-Y_{kj}-\tilde{Y}_{ki}+\tilde{Y}_{kj}&\stackrel{\eqref{compatibility}}{=}-Y_{ij}+ \tilde{Y}_{ij}-a(Y_{jk},Y_{ki})+a(\tilde{Y}_{jk},\tilde{Y}_{ki})\\
    &\stackrel{\eqref{a-bound}}{=}-Y_{ij}+ \tilde{Y}_{ij}+O_G((\|Y\|_\infty+\|\tilde{Y}\|_\infty)\|Y-\tilde{Y}\|_\infty),
\end{align*}
and this last big O term is again $o_G(\|Y-\tilde{Y}\|_\infty)$. Thus
\[
\dot{Y}_{ij}-\dot{\tilde{Y}}_{ij}=-\kappa \left[d\phi_e(Y_{ij}-\tilde{Y}_{ij})+o_{G,\phi}(\|Y-\tilde{Y}\|_\infty)\right]+O(\|Y-\tilde{Y}\|_\infty\cdot \|H\|_\infty).
\]
\end{proof}

\begin{proof}{\bf (Proof of Theorem \ref{Ymainthm}(2).)} If $c_1$ is small enough so that $\|Y\|,\|\tilde{Y}\|<c_1$ implies that $Y,\tilde{Y}\in B_r$ and that the $o(\|Y-\tilde{Y}\|_\infty)$ term in \eqref{secondGronwall} is at most $\frac{\lambda \|Y-\tilde{Y}\|_\infty}{3}$, \eqref{secondGronwall} becomes 
\[
    \left.\frac{d}{dt}\right|^+ \|Y-\tilde{Y}\|_\infty \le -\frac{2\kappa \lambda}{3} \|Y-\tilde{Y}\|_\infty+C\|Y-\tilde{Y}\|_\infty\cdot \|H\|_\infty
\]
for some absolute constant $C>0$. By choosing $\kappa>\frac{3C}{\lambda}\|H\|_\infty$ we have
\[
    \left.\frac{d}{dt}\right|^+ \|Y-\tilde{Y}\|_\infty \le -\frac{\kappa \lambda}{3} \|Y-\tilde{Y}\|_\infty
\]
for times $t$ at which $\|Y(t)\|,\|\tilde{Y}(t)\|<c_1$. This hypothesis is satisfied by Theorem \ref{Ymainthm} (1) if $\|Y(0)\|,\|\tilde{Y}(0)\|<c_1$, in which case we have
\begin{equation}\label{expsync}
\|Y(t)-\tilde{Y}(t)\|_\infty\le \|Y(0)-\tilde{Y}(0)\|_\infty \exp\left(-\frac{\kappa \lambda}{3}t\right).
\end{equation}

\end{proof}

\begin{remark}\label{chaosremark}
Question \ref{chaos} may be reformulated as follows. Is it possible that in a semi-local regime with intermediate coupling strength $\kappa$:
\[
c_1'<\max_{i,j=1,\cdots,N}d(X_i^0,X_j^0)<c_1,\quad c\|H\|_\infty <\kappa<c'\|H\|_\infty,
\]
with different constants $c_1'$ and $c'$, \eqref{firstGronwallprim} holds yet \eqref{secondGronwallprim} fails?
\end{remark}

\par{\bf Acknowledgements.}\, This work was written while the author was visiting Seoul National University under the auspices of the Hyperbolic and Kinetic Equations (HYKE) group. The author thanks Hangjun Cho and Myeongju Kang of the HYKE group for helpful discussions. This work was partially supported by the Korea Foundation for Advanced Studies. The author thanks the anonymous referee for valuable comments and suggestions.

\bibliographystyle{myabbrv}
\bibliography{main}

          \end{document}